\documentclass[onefignum,onetabnum]{amsart}

\usepackage{lipsum}
\usepackage{amsfonts}
\usepackage{graphicx}
\usepackage{epstopdf}
\usepackage{algorithmic}
\usepackage{verbatim}
\usepackage{cleveref}
\usepackage[foot]{amsaddr} 

\theoremstyle{plain} 
\newtheorem{theorem}{Theorem}[section]
\newtheorem{lemma}[theorem]{Lemma}
\newtheorem{proposition}[theorem]{Proposition}

\theoremstyle{definition} 
\newtheorem{definition}[theorem]{Definition}
\newtheorem{example}[theorem]{Example}
\newtheorem{assumption}[theorem]{Assumption}

\theoremstyle{remark} 
\newtheorem{remark}[theorem]{Remark}

\crefname{hypothesis}{Hypothesis}{Hypotheses}


\title{Random walks on dense graphs and graphons}

\author{Julien Petit$^{1,2}$}\author{Renaud Lambiotte$^3$}\author{Timoteo Carletti$^2$}
\address{$^1$ Department of Mathematics, Royal Military Academy, Brussels, Belgium}
\address{$^2$ Department of Mathematics and  Namur Institute for Complex Systems (naXys),  University of Namur, Belgium}
\address{$^3$ Mathematical Institute, University of Oxford, UK}
\email{julien.petit@unamur.be}
\email{renaud.lambiotte@maths.ox.ac.uk}
\email{timoteo.carletti@unamur.be}

\usepackage{amsopn}

\DeclareMathOperator{\ran}{ran}
\DeclareMathOperator{\str}{str}

\DeclareMathOperator*{\esssup}{ess \, sup}

\DeclareMathOperator{\dom}{dom}
\DeclareMathOperator{\spanset}{span}

\usepackage{mathtools}
\usepackage{amssymb}
\DeclarePairedDelimiter\norm{\lVert}{\rVert} 
\DeclarePairedDelimiter\abs{\lvert}{\rvert}

\hyphenation{bio-lo-gy}

\begin{document}

\maketitle

\begin{center}
	{\small
\today }
\end{center}

\begin{abstract}
Graph-limit theory focuses on the convergence of sequences of increasingly large
graphs, providing a framework  for the study of dynamical systems on massive graphs, where classical methods would become computationally intractable. Through an approximation procedure, the standard ordinary differential equations are replaced by nonlocal evolution equations on the unit interval. 
In this work, we adopt this methodology to prove the validity of the continuum limit of random walks, a  largely studied model for diffusion on graphs. 
We focus on two classes of processes on dense weighted graphs, in discrete and in continuous time, whose dynamics are encoded in the transition matrix of the associated Markov chain,  or in the random-walk Laplacian.
We further show that previous works on the discrete heat equation, associated to the combinatorial Laplacian, fall within the scope of our approach. Finally, we characterize the relaxation time of the process in the continuum limit.
\end{abstract}

\smallskip
\textbf{Key words.}
  random walk, dense graph, graphon, continuum limit

\textbf{AMS subject classification.}
	05C81, 34G10,  45K05, 47D06


\section{Introduction}
\label{sec:introduction}

Graphs are everywhere, they appear for example in physics, engineering, biology, climat science, expectedly every time discrete entities interact through links of some nature~\cite{barabasi2016network}. They offer a conceptually simple but general enough modeling approach to real-life networks of various degrees of complexity. But as very large graphs have become commonplace in scientific research and real-world applications --   online social networks or the brain are   relevant examples -- a range of graph-theoretical methods, algorithms and computational problems on graphs face scalability issues. In a recent series of works on the continuum limit of graphs, graphons have emerged as an appropriate limit object, defined when the number of nodes goes to infinity~\cite{lovasz2006limits,borgs2008convergent,borgs2011limitsrandomsequences,lovasz2012large}. They offer a both elegant and efficient workaround allowing for the analysis of massive graphs, while simultaneously providing a non-parametric network generation method that reaches far beyond the classical stochastic block model~\cite{holland1983stochastic}. The versatility of graphons and their practical interest is revealed in their present-day use in many diverse domains, e.g. network identification~\cite{gao2015rate}, power network dynamics~\cite{kuehn2019power}, epidemics spreading~\cite{vizute2020epidemics},  reaction-diffusion~\cite{kaliuzhnyi2017semilinear}, synchronization of oscillators~\cite{medvedev2014nonlinearDENSE, medvedev2018kuramoto} and game theory~\cite{parise2019graphon}. However, the majority of these examples focus on a specific type of diffusion, namely Fickean diffusion,   while alternative  random-walk processes are usually preferred to build network algorithms for node ranking~\cite{Brin1998conf} or community detection~\cite{Delvenne2010PNAS}. This constitutes an important limitation that we address in this paper.

Diffusion on finite graphs is an extensive topic of research, which is relevant both from a theoretical and an applied perspective, and is often modeled as some variant of random walk process. 
Random walks on graphs are useful in many ways. They can for example identify clusters of well-connected nodes, also known as communities~\cite{rosvall2009map, Delvenne2010PNAS}, or measure the relative importance or centrality of the nodes in a networked system~\cite{Brin1998conf, langville2004deeper,lambiotte2012ranking}, and as mentioned are   a paradigm for various diffusive and spreading processes on graphs~\cite{masuda2017random}.  
There are overall three dominant classes of random walks. The first one is the  discrete-time walk, in which case the walker performs a new jump at every discrete  time step. The destination node of the jump is uniformly chosen among the neighbors   in the graph structure. Secondly there is the continuous-time, node-centric variant. The difference resides in that the jumps take place at any point in time, as dictated by a continuous random variable governing the resting time of the walker on a node. Finally, the third class corresponds to the continuous-time edge-centric walk, which can  interestingly be viewed as the discrete version of the heat equation. These are two different forms of normal diffusion equations that can be derived from Fick's law, that essentially implies a flux from regions of high to low concentration, across a concentration gradient.  The difference between the node-centric and edge-centric processes  is clear when observing the matrices controlling their dynamics, the random-walk and the combinatorial Laplacian respectively~\cite{petit2019classes}.

In this work, we first revisit existing results for the continuum limit of the discrete heat equation  and some nonlinear variants. This limit was the subject of a series of  papers recently~\cite{medvedev2014nonlinearDENSE,medvedev2014small,medvedev2018kuramoto}, but a random walk interpretation is useful and was still lacking.  We then concentrate on the continuum limit of the node-centric case, hence considering the limit of the  random-walk Laplacian operator. In general, for non-regular graphs, this operator differs from the combinatorial Laplacian, which is often preferred in algorithmic implementations such as spectral clustering~\cite{von2008consistency}, because it properly accounts for the heterogeneous degree distributions observed in real-life networks.  The random-walk operator in this work shouldn't be confused with a another operator common in the machine learning community,  also called random-walk Laplacian,  
which has an established convergence to  the Laplace-Beltrami operator~\cite{hein2007graph,BELKIN20081289}. 

Our approach is based on graph-limit theory~\cite{lovasz2012large}, which does not rely on the assumption that the data generating the graphs is sampled from a distribution on a manifold~\cite{gine2006empirical,rosasco2010learning}.  Our main contribution is to prove the convergence of the space-discrete problem to a continuous problem in some appropriate setting. The problem on the  continuum then falls in the realm of nonlocal evolution equations, as it is a volume-constrained diffusion problem~\cite{d2017nonlocal}. Its analysis is  limited  to some   consequences of spectral theory applied to our operators. 

Importantly, graph-limit theory defines a framework for the convergence of graphs of increasing size, but it may as well be seen as   a possibly random graph-generating method. From that perspective, our work demonstrates that one may analyze the continuum model, to draw valid conclusions regarding the dynamics on the graphs generated by that model. As such, we answer positively the question of transferability, showing that we can avoid the cost of repeatedly storing each graph, designing, visualizing or analyzing it,  or computing the associated spectral properties, a computationally demanding task for large graphs~\cite{morgan1991computing}.

The paper is organized as follows. \Cref{sec:preliminaries} contains the basics about graph-limit theory, and introduces graphons as the limit objects of dense graph sequences. A short presentation of the main random walk models opens~\cref{sec:random-walk}. Then follows a random walk interpretation of the continuum limit of the heat equation on graphs, before we focus on our main concern, the continuous-time node-centric walk. Well-posedness of the continuum problem is the subject of \cref{sec:well-posedness}. The main convergence results are presented in~\cref{sec:Convergence-on-dense-weighted-graphs}. These results apply to dense graphs, and follow from a semigroup approach. We distinguish between different scenarios : first the discrete problem on graphs is sampled from the continuum version, and then the other way around. We then proceed with an analysis of the relaxation-time of the process based on spectral theory in~\cref{sec:relaxation}. In~\cref{sec:discrete-time} we revisit the discrete-time problem, before the conclusion of~\cref{sec:conclusions}.

\section{Preliminaries}
\label{sec:preliminaries}
We first set the notation and recall definitions for various graph-related concepts, following closely~\cite{woess2000random}. For the sake of self-consistency we then introduce key notions about graph-limit theory~\cite{lovasz2012large}. 

\subsection{Graphs}
\label{sec:graphs}
Let a $G=(V,E)$ be a graph where $V$ is  a finite   set of vertices (or nodes), and $E \subset V \times V$ is the set of edges. We consider a symmetric adjacency relation~$\sim$ such that the  graph is undirected. 
Each edge may be attributed a weight,  making the otherwise unweighted graph into a weighted one. Let $\vert V \vert$ and  $\vert E \vert $ respectively denote the number of vertices and edges.  The density $\rho $ of the graph is the fraction of edges that are actually present, compared with the maximum possible number of connections~: $\rho = \frac{\vert E \vert}{\binom{|V|}{2}}  $.  When it makes sense to take the limit $\vert V \vert  \rightarrow \infty$, one says the graph is dense if $\vert E \vert = O(\vert V\vert^2) $, and sparse otherwise. 

 The number of neighbors of node $v$  is denoted by $\deg(v)$. In weighted graphs,   $\str(v)$ stands for the weighted degree, or strength, of  node $v$, namely  the sum of the weights of all edges attached to $v$. For the sake of simplicity, the notation $\str(v)$ will henceforth also apply to unweighted graphs and will refer to the degree, thereby identifying unweighted graphs with graphs with binary weights, either~0 or~1. 

A  path  between two nodes $v, w$  is an ordered sequence of nodes $ \left[v_1,\ldots, v_n\right]$ such that $v = v_1$, $w=v_n$ and $v_i \sim v_{i+1}$, $i=1,\ldots, n-1$. 
A graph is connected if every pair of nodes is linked by a path. Let $\mathbb M_n$ be the space $n \times n$ matrices. The adjacency matrix $A \in \mathbb M_n$ of a   graph $G$ with $n$ vertices is the square matrix where $A_{ij}$ is the weight of the edge between nodes with labels $i$ and $j$ and zero if no such edge exists. Unweighted graphs have binary adjacency matrix.

\subsection{Graphons}
\label{sec:graphons}
Recent research~\cite{lovasz2006limits,borgs2008convergent,borgs2011limitsrandomsequences,lovasz2012large} provides a theoretical framework to study convergence of dense graphs sequences. As a starting point, the so-called cut (or rectangular) metric allows to define the notion of Cauchy sequence of graphs of increasing number of nodes. Their limit object, called graphon, is a symmetric Lebesgue-measurable function $W : [0,1]^2 \rightarrow [0,1]$\footnote{Note that this choice of domain and range is somehow restrictive by comparison with other works where graphons may be unbounded and only feature some integrability property.
However, we will work with the standard definition because it achieves the desired degree of generality.}. Therefore, the space of graphons is essentially the completion of the set of finite graphs seen as step functions (see Section~\ref{sec:from-graphons-to-graphs-and-back}), endowed with the so-called cut metric\footnote{%
	There is a different though equivalent notion of convergence for dense graph sequences. It is  called subgraph convergence, and is defined via associated sequences of induced subgraph densities~\cite{borgs2008convergent}. 
}
which we introduce hereafter in its graphon version~\cite{lovasz2012large, medvedev2014nonlinearDENSE, gao2017control}. 
The cut norm for graphons  is given by 
$$\Vert W \Vert_\Box = \sup_{S,T \subset \mathfrak M[0,1]} \iint_{S \times T}W(x,y) dx dy, $$
where the supremum is over measurable subsets of $[0,1]$.  The notation $\norm{W}_p$ refers to the usual $L^p$ norm of function defined on $[0,1]^2$, for $1\leq p \leq \infty$. 
The following inequalities are immediate consequences of this definition, and of the inclusion theorem of $L^p$ spaces on finite measure spaces~:
\begin{equation}\label{eq:norms}
\Vert W \Vert_\Box \leq \Vert W \Vert_1 \leq \Vert W \Vert_2 \leq  \Vert W \Vert_\infty \leq 1. 
\end{equation}
Graphons are unique up to a composition with an invertible measure preserving mapping $\phi  :[0,1] \rightarrow [0,1]$, which amounts to  invariance of the limit graphon with respect to a relabeling of the nodes of the graphs. The graphons  $W^\phi$ defined by $W^\phi(x,y) = W(\phi(x),\phi(y))$  and $W$ are in the same equivalence class. The cut metric $\delta_\Box$  between two graphons $U$ and $W$ is therefore defined by 
\begin{equation}
\delta_\Box(U,W) = \inf_{\phi \in \mathfrak L } \Vert U^\phi - W \Vert_\Box
\end{equation}
where $\mathfrak L$ is the space of the Lebesgue measurable bijections on the  unit interval. The definition is similar for the  $\delta_p(\cdot,\cdot)$ metrics based on the $L^p$  norms, $1\leq p \leq \infty$. Since two different graphons $U,W$ can satisfy $\delta_\Box (U,W) = 0$, strictly speaking $\delta_\Box$ is a metric only when we identify such graphons $U$ and $W$ \cite{borgs2008convergent}. Let  us denote by $\mathbb W$ the space of graphons after this identification. 

It holds that the metric space $(\mathbb W,\delta_\Box)$ is compact, namely sequences of graphons posses at least one convergent subsequence in the cut metric. Unless explicitly mentioned,  in this work we assume convergence of graphons in the $L^2$ norm topology. Hence by completeness,  Cauchy sequences in $(\mathbb W, \norm{\cdot, \cdot}_2)$ converge in the $L^2$ metric, and thus also in the $\delta_2$ and $\delta_\Box$ metrics, the limit being the same. 

Many attributes of graphs   have natural counterparts in the realm of graphons. A prominent example is the notion of strength, which plays a key role in this paper.  For a given graphon we let 
$
	k(x) : = \int_0^1 W(x,y) dy
$
denote the (generalized) degree function. Since in this work graphons are bounded function $W:[0,1]^2 \rightarrow [0,1]$, the degree function  is bounded, $0 \leq k(x) \leq 1$ for all $x \in [0,1]$. 
 
\subsection{Graphs as step graphons and graphs from graphon models}
\label{sec:from-graphons-to-graphs-and-back}
The connection between graphs and graphons is a two-way street. First, graphs can be mapped to the graphon space through a step function representation of their adjacency matrix. Let $\mathcal P = \left\{ P_1,\ldots, P_n  \right\}$ be a uniform partition of $[0,1]$, where $P_i = \left[ \frac{i-1}{n}, \frac{i}{n}\right)$ for $i = 1, \ldots, n-1$, and $P_n = \left[  \frac{n-1}{n}, 1  \right]$. Then let $\eta :  
\mathbb M_n \rightarrow \mathbb W$ be a mapping such that
\begin{equation}
\eta (G) (x,y) = \sum_{i=1}^{n} \sum_{j= 1}^{n} A_{ij} \chi_{P_i}(x) \chi_{P_j}(y), 
\end{equation}
where $\chi_S$ is the indicator function of set $S$ and $A$ the adjacency matrix of graph $G$. The mapping  thus defines the step (or empirical) graphon $\eta (G)$ associated to $G$. Similarly, $\eta$ maps vectors $\mathbf u = (u_1,\ldots, u_n)$ to piecewise constant functions on $[0,1]$, so that
\begin{equation}
\eta (\mathbf  u)(x) = \sum_{i =1}^n u_i \chi_{P_i}(x). 
\end{equation}

On the other hand, graphons can be considered as deterministic or  (exchangeable~\cite{diaconis2007graph}) random graph models, but in this work we adopt and present the   deterministic setting. Let $W \in \mathbb W$ be a graphon and let the integer $n$   denote the desired   number of nodes in the graph. Then $W$ generates a dense graph by  assigning weights  to the edges, which can be done in two ways.  In a first approach, the weight $A_{ij}$ of the edge between two nodes~$i$ and  $j$ equals the  mean value of $W$ on the corresponding cell of the partition of the unit square: 
\begin{equation}\label{eq:quotient-graph}
A_{ij} = n^2 \int_{P_i} \int_{P_j} W(x,y) dx dy,       \quad i,j = 1,\ldots, n. 
\end{equation}
This results in the so-called quotient graph $W/\mathcal P$. One can prove that there is almost everywhere point-wise convergence of the associated step graphon $\eta (W/\mathcal P)$ to $W$ (\cite{borgs2008convergent}, lemma 3.2). 

A second approach to generate a graph from a given graphon $W \in \mathbb W$, is to define 
\begin{equation}\label{eq:sampled-graph}
A_{ij} = W\left (\frac{i}{n}, \frac j n \right) ,       \quad i,j = 1,\ldots, n, 
\end{equation}
in a way that is reminiscent of $W$-random graphs \cite{lovasz2006limits}.  Let us denote  $W_{[n]}$  the corresponding graph. Observe that $\eta \left(W_{[n]} \right) \rightarrow W$ point-wise at every point of continuity of $W$~\cite{medvedev2014nonlinearDENSE}. 

\subsection{Graphons as kernels of operators}
\label{eq:kernels}

Every graphon $W \in \mathbb W$ can be considered as a kernel, allowing to formally define an integral operator $\mathcal W$ on functional  spaces on $[0,1]$ through
\begin{equation}\label{eq:HSoperator}
\mathcal W f (x) = \int_0^1 W(x,y) f(y) dy. 
\end{equation}
The composition (product) of two such operators is given by 
\begin{equation}
\mathcal U \mathcal W f(x) = \int_0^1 (U\circ W)(x,y) f(y) dy, 
\end{equation}
where $\circ$ is the operator product between the graphon kernels, defined by 
\begin{equation}
\label{eq:UcircW}
(U \circ W) (x,y) = \int_0^1 U(x,z) W(z,y) dz, \quad \forall x,y \in [0,1]. 
\end{equation}
Observe that in general, $U \circ W$ is not a symmetric function. We denote $W^{\circ n}$ the operator product of  the kernel, as opposed to the point-wise product $W^n(x,y) = (W(x,y))^n$, which is associated to   the operator $\mathcal W^n$. It follows from \cref{eq:UcircW} that
$
W^{\circ n}(x,y) = \int W(x,z_1) W(z_1,z_2)   \ldots   W(z_{n-1},y) dz_1 dz_2 \ldots dz_{n-1}. 
$

\section{Random walks and their continuum limit}
\label{sec:random-walk}
The aim of this section is twofold. Firstly, we introduce the three main random walk models.  Secondly, we give a random walk perspective on  the continuum limit of the discrete heat equation, namely the edge-centric walk. We then formally derive the continuum limit of the so-called node-centric   walk. 
Our approach implies edges are directed, because they are associated with possible moves of the walker, with an origin and a destination. Therefore, the symmetry of the adjacency matrix indicates there exists a reciprocal  to each edge and that both have the same weight. Further, we may assume that the graph is connected, otherwise the random walk  is considered independently on each connected component, that is, each connected subgraph that is connected to no other additional node of the original graph.

\subsection{Random walks in discrete  and continuous time}

In discrete-time, we introduce  a random walk on a  connected graph as  a Markov chain where $V$ is the state-space and the transition probability  from node $v_i$ to $v_j$ is encoded in the matrix 
\begin{equation}
T_{ij}= \begin{cases}
1/\str \left(v_i\right) & \mbox{ if } v_i \sim v_j, \\
0             & \mbox{ otherwise.}
\end{cases}
\end{equation}
Let $\mathbf p (\ell)= (p_1(\ell),\ldots, p_n(\ell))$ be the row vector of residence probabilities on the nodes, that is, $p_i(\ell)$ is the probability that the walker is located on node number $i$ after $\ell$ steps. Then
\begin{equation}\label{eq:DTRW}
\mathbf{p}(\ell+1) = \mathbf{p}(\ell) T ,
\end{equation} 
where  $T = D^{-1}A$. Here $D $ denotes the diagonal matrix of the strengths, or degrees in unweighted graphs.   It follows from \cref{eq:DTRW} that for any $\ell \in \mathbb{N}$,  $\mathbf{p}(\ell) = \mathbf{p}(0) T^\ell$.   

In the continuous-time node-centric variant, when the walker arrives on node $v$, a probability density function $\psi_v(t)$ determines the waiting-time until the next jump, in which case a destination node is selected uniformly among the neighboring ones. We limit ourselves to Poissonian walks,  for which the waiting-time follows a memoryless exponential distribution $\psi_v(t) = \mu_v \exp(-\mu_v t)$ with rate $\mu_v$ ($t\geq 0$). The master equation for  $u_i(t)$, the probability  to find the walker on node $i$ at time $t$,   reads
\begin{equation} \label{eq:angstmann}
\dot u_i(t) = \sum_{j = 1}^{N} \mu_j u_j \frac{1}{\str(v_j)} A_{ji} - \mu_i u_i, \qquad   i=1,\dots,n . 
\end{equation}
Assume that in~\cref{eq:angstmann} the rate $\mu_j$  on the nodes   is the same for all nodes, $\mu_j = \kappa >0$ for all $j$. Then $\kappa$ sets the timescale, and after a scaling of time, $t \mapsto \kappa t$, under matrix form the master equation~\cref{eq:angstmann} rewrites
\begin{equation}\label{eq:node-centric}
\dot{ \mathbf u} = \mathbf u (D^{-1} A-I), 
\end{equation}
where   $ \mathbf u(t) = (u_1(t),\ldots, u_n(t))$ is a row vector.  The matrix $L^{rw} = D^{-1} A - I$ is the random walk Laplacian. 
, 
Moreover, it is easy to show that the discrete-time walk and the continuous-time version  share the same  asymptotic state, and that it is proportional to $\left(\str(v_1), \ldots, \str(v_n)\right)$.

In the edge-centric variant\footnote{
The fact that this walk can be formulated in terms of edges dynamics, where the walker passively follows the  activations of the edges explains the alternative designation of ``fluid model''~\cite{masuda2017random}. It is the graph version of the heat equation on a continuum. 
},  the rate of the exponential distribution is proportional to the degree of the node, $\mu_j = \kappa \str (v_j)$, allowing a constant rate of jump across all edges of the graph. Hence in matrix form, \Cref{eq:angstmann} rewrites
\begin{equation} \label{eq:edge-centric}
\dot{ \mathbf u} = \kappa \mathbf u (A-D)
\end{equation}
Here, $L = A-D$ is called the combinatorial Laplacian of the graph. This model exhibits a homogeneous asymptotic state. 
Observe that the number of jumps is not trajectory-independent, as is the case in both the discrete- and continuous-time node-centric  walks.

\subsection{Formal derivation of the  continuum limit}
\label{sec:continuum-limit-node-centric}
Let us first  take a closer look at the edge-centric walk, and assume for simplicity an unweighted graph.  If $\kappa >0$, then $\kappa \deg(v_j) \rightarrow \infty $ if $\deg(v_j) \rightarrow \infty$, which will happen for some if not all nodes of a dense  graph. The walker would perform jumps at an infinite rate, which is physically unrealistic. Normalizing the rate of the process according to the number of vertices avoids this situation. If~$\kappa$ becomes dependent on $n$, say $\kappa_n = \frac{1}{n}$, the resulting rate in each node remains bounded, $\kappa_n \deg(v_j) \leq 1$ for all $j$ independently of the number of nodes.  This explains the normalization that was required to justify  the continuum limit of \cref{eq:edge-centric}  in~\cite{medvedev2014nonlinearDENSE}.

In contrast with the edge-centric model,  no normalization of the rate parameter~$\kappa$ of the node-centric walk is needed when the number of nodes grows to infinity, since the rate does not depend on the   structure of graph.  The continuum limit therefore directly applies to the unmodified discrete model. For a formal derivation in this case, consider again the  vector $\mathbf u (t) $ satisfying~\cref{eq:node-centric} and the uniform partition $\mathcal P = \left\{ P_1,\ldots, P_n  \right\}$ of $[0,1]$, with $u(\cdot, t) := \eta (\mathbf u (t))$ an associated step function on the interval.  
Let  $k_\eta$ denote the generalized degree function of the step graphon $\eta(G)$. 
Observe that this degree function  is actually the normalized strength (or also degree, when the graph is unweighted) of the nodes in $G$: 
\begin{align}
\str(v_i) & = n \sum_{j = 1}^n \int_{P_j} A_{ij} dy = n \sum_{j = 1}^n \int_{P_j} \eta(G)(x,y)dy = n k_\eta(x)
\end{align}
for all $x \in P_i$. It follows that 
\begin{align}
\sum_{j = 1}^n \frac{A_{ij}}{\str\left(v_j\right)} u_j(t) & = \sum_{j = 1}^n n \int_{P_j} \frac{  A_{ij}  }{   \str \left(v_j\right)   }u(y,t) dy \nonumber \\
																				 & = \sum_{j = 1}^n n \int_{P_j} \frac{  A_{ij}  }{  n k_\eta(y) }u(y,t) dy= \int_0^1 \frac{  \eta(G)(x,y)   }{ k_\eta(y)   }u(y,t) dy, 
\end{align}
for every $x \in P_i$. Hence, the node-centric walk  on the graph has an equivalent continuum domain formulation
\begin{equation} \label{eq:node-centric-step}
\frac{\partial }{\partial t } u(x,t)  = \int_0^1 \frac{\eta(G)(x,y)}{k_\eta(y)} u(x,t)dy - u(x,t). 
\end{equation}
The goal of this work is to prove convergence in the appropriate norm of the solution of \cref{eq:node-centric-step} to the solution of the  evolution equation on the continuum
\begin{equation} \label{eq:node-centric-graphon}
\frac{\partial }{\partial t } w(x,t)  = \int_0^1 \frac{W(x,y)}{k(y)} w(y,t) dy - w(x,t), 
\end{equation}
where $W$ is the limit graphon of $\eta(G)$ in the $L^2$ metric.

Observe that similarly, a discrete equation of the form~\cref{eq:node-centric} is obtained starting from~\cref{eq:node-centric-graphon}, when the  graph is $W/\mathcal P$ or $W_{[n]}$.

\section{Well-posedness of the continuum initial value problem}
\label{sec:well-posedness}
Before we   prove the above-mentioned convergence, let us determine whether  the up-to-now formal \cref{eq:node-centric-graphon}, together with initial condition $w(x,0) = g(x)$, defines   a well-posed initial-value problem~(IVP). 

\subsection{Connectedness of the graphon}
Care will be taken first regarding how connectedness in the graph translates to graphons, and how it affects the integrability of $W/k$. The following definition  follows from~\cite{janson2008connectedness,lovasz2012large}. 
\begin{definition}
	A graphon $W$ is connected if 
	$
	\int_{S \times \left( [0,1] \setminus S \right)}W(x,y) dx dy >0
	$ 
	for every $S \in \mathfrak M [0,1]$ with lebesgue measure $\mu (S) \in (0,1)$. 
\end{definition}

Notice at this point that the connectedness (or lack thereof) of the graphs $G_n$ of the sequence does not imply that of their limit~\cite{janson2008connectedness}. Indeed, one could always make all the (otherwise disconnected) graphs of the sequence connected by a adding each time a node connected to all other nodes. This would leave the limit unchanged. And conversely, disconnecting one node in each connected graph of the sequence would not change the limit either. Also note that if a graphon $W$ is (dis)connected, then so are all the kernels in the same equivalence class~(\cite{janson2008connectedness}, theorem 1.16). Let us now look into the implications of connectedness of the graphon on the positiveness of the degree function and hence on the definition of the random walk Laplacian operator. 
\begin{proposition}\label{prop:connected-graphon}
	Let $W$ be a connected graphon, then $k > 0$ $\mu$-almost everywhere (a.e.). 
\end{proposition}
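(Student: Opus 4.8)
The plan is to argue by contradiction via the zero set of the degree function. Set $S := \{x \in [0,1] : k(x) = 0\}$. Since $W$ is measurable and nonnegative, Tonelli's theorem guarantees that $k$ is itself measurable, so $S \in \mathfrak M[0,1]$. Because $W$ takes values in $[0,1]$ we have $k \geq 0$ everywhere, hence proving $k > 0$ $\mu$-a.e.\ amounts to showing $\mu(S) = 0$. I would rule out the two remaining possibilities $\mu(S) \in (0,1)$ and $\mu(S) = 1$ separately.

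First I would treat the generic case $\mu(S) \in (0,1)$, where the connectedness hypothesis applies directly to $S$. The key observation is the bound
\begin{equation*}
0 \leq \int_{S \times ([0,1]\setminus S)} W(x,y)\,dx\,dy \leq \int_S \left( \int_0^1 W(x,y)\,dy \right) dx = \int_S k(x)\,dx = 0,
\end{equation*}
where the middle inequality uses $W \geq 0$ (enlarging the inner domain from $[0,1]\setminus S$ to $[0,1]$) and the last equality uses $k \equiv 0$ on $S$. This forces the cut integral to vanish, contradicting the definition of a connected graphon.

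It remains to exclude the degenerate case $\mu(S) = 1$, which the connectedness hypothesis does not reach directly since it only quantifies over sets of measure strictly between $0$ and $1$. Here $k = 0$ $\mu$-a.e., so by Tonelli $\norm{W}_1 = \int_0^1 k(x)\,dx = 0$, whence $W = 0$ $\mu$-a.e.\ on $[0,1]^2$. But then every cut integral vanishes, and choosing any fixed $S'$ with $\mu(S') \in (0,1)$ (for instance $S' = [0,1/2]$) again contradicts connectedness. Having eliminated both alternatives, I conclude $\mu(S) = 0$.

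The argument is essentially elementary; the only points requiring care are the measurability of $S$ (secured by Tonelli applied to the nonnegative kernel) and the clean separation of the degenerate case $\mu(S)=1$ from the generic one, since the connectedness definition is phrased only for cuts with $\mu(S) \in (0,1)$. I do not expect a substantive obstacle beyond this bookkeeping.
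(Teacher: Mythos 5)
Your proof is correct, but it takes a genuinely different (and more self-contained) route than the paper. The paper introduces the neighborhood $N_x = \{y \in [0,1] : W(x,y) > 0\}$ and invokes Lemma~5.1 of Janson's paper on graphon connectedness, which asserts that $\mu(N_x) > 0$ for $\mu$-almost every $x$ when $W$ is connected; positivity of $k(x) = \int_{N_x} W(x,y)\,dy$ then follows immediately because one integrates a strictly positive integrand over a set of positive measure. You instead work directly with the zero set $S$ of $k$ and derive a contradiction with the definition of connectedness via the estimate $\int_{S \times S^c} W \leq \int_S k = 0$, taking care to dispose separately of the degenerate case $\mu(S) = 1$ (where the connectedness condition does not apply to $S$ itself but $W$ is forced to vanish a.e., so any cut witnesses the contradiction). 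What your argument buys is independence from the cited lemma and explicit bookkeeping of the edge case; what the paper's version buys is brevity, at the cost of outsourcing the substance of the argument to the reference. Both are sound, and your measurability remarks (Tonelli for the measurability of $k$, hence of $S$) are the right points to flag.
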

\begin{proof}
Let $N_x = \left\{  y \in [0,1] \, : \, W(x,y)>0 \right\}$ denote the neighborhood of $x\in [0,1]$ in $W$. 
Since $W$ is connected,   $\mu(N_x) >0$ for $\mu$-almost every $x$~(\cite{janson2008connectedness},~lemma 5.1) and therefore, 
\begin{equation}
k(x) = \int_{N_x} W(x,y) dy > 0\ \mbox{ for } \ \mu\mbox{-a.e. } x. 
\end{equation} 
\end{proof}
\begin{remark}
The connectedness of the graphon does not imply however that the degree function is bounded away from zero, namely that there exists a constant $c$ such that $0 < c \leq k$ on $[0,1]$. Take for instance $W(x,y) = x^m y^m$ with $m >0$, for which $k(x) = x^m /(m+1)$.
\end{remark}
That $k$ can be arbitrarily small influences the integrability of the kernel $K(x,y):=\frac{W(x,y)}{k(y)}$ in~\cref{eq:node-centric-graphon}, as discussed in the following remark. 
\begin{remark}
	The connectedness of the graphon does not imply that the integral kernel $K(x,y)$ is in $L^p[0,1]^2$ for $p>1$.   Consider for example the binary graphon  $W= \chi_{x^\alpha+y^\alpha \leq 1} $ for $\alpha >0$,  where the subscript $x^\alpha+y^\alpha \leq 1$ is short for the set of couples $(x,y) \in [0,1]^2$ such that the inequality is satisfied. By a direct integration, $k(x) = \left(1-x^\alpha\right)^\frac{1}{\alpha}$.  The integral
	\begin{equation}\label{eq:counter-exemple}
	\norm{K}_p^p = \iint_{x^\alpha + y^\alpha \leq 1}(1-y^\alpha)^{-\frac{p}{\alpha}} dx dy = \int_0^1 (1-y^\alpha)^{\frac{1-p}{\alpha}} dy
	\end{equation}
	is finite if and only if $p<1+\alpha$. Hence, $K$ is in $L^2[0,1]$ only if $\alpha >1$, and in particular, the kernel $K$ of  the threshold graphon~\cite{diaconis2008threshold} obtained with $\alpha = 1$ is not square-integrable. 
	However, using Fubini-Tonelli it is easy to show that $\norm*{K}_1 = 1$  for all connected graphons, such that $K$ is always in $L^1[0,1]^2$. 
\end{remark}

Based on the preceding remark, in order to ensure that the kernel is square integrable, we will make the following assumption : 
	\begin{assumption}\label{hyp:bounded-away-from-0}
	There exists a constant $c$ such that  $0<c \leq k$ on $[0,1]$. 
\end{assumption}
If $W$ is bounded away from zero, so is $k$, but graphons with localized support may still fulfill the assumption, as shown by \Cref{fig:stripe-graphon}.

\begin{figure}[tbhp]
\centering
\includegraphics[width = .8\textwidth]{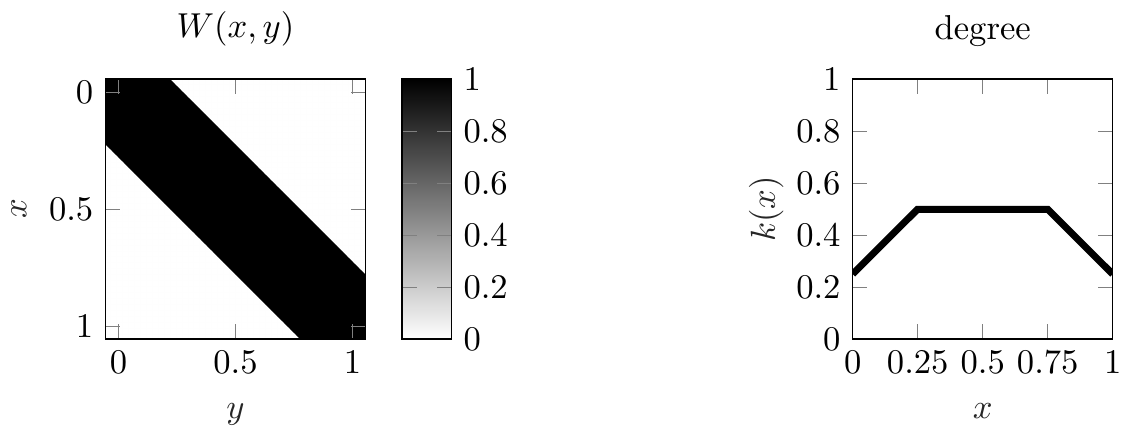}
\caption{The stripe graphon $W(x,y) = \chi_{|x-y| \leq 1/4}(x,y)$ (left panel) and its degree function (right panel). Observe that the support of $W$ is localized on a subset of the square, but $k$ is bounded away from zero.   }
\label{fig:stripe-graphon}
\end{figure}
\subsection{The IVP with functions in $L^2[0,1]$}

Resting on the operator in the right-hand side of~\cref{eq:node-centric-graphon}, we come to the following definition. 
\begin{definition}
	\label{def:RWLaplacian} Let $W \in \mathbb W$ be a connected graphon that verifies~\cref{hyp:bounded-away-from-0}. The random-walk Laplacian operator $\mathcal{L}^{rw} : L^2[0,1] \rightarrow L^2[0,1]$   is defined by
	\begin{equation}
\mathcal{L}^{rw} f(x) = \int_0^1 \frac{W(x,y)}{k(y)} f(y) dy -f(x). 
	\end{equation}
\end{definition}
By definition of $W$ and \cref{hyp:bounded-away-from-0},   $K(x,y) = W(x,y)/k(y)$ is a Hilbert-Schmidt kernel and $\mathcal K : L^2[0,1] \rightarrow L^2[0,1]$ defined by
\begin{equation}\label{eq:compact-op-markov-kernel}
\mathcal Kf(x)  =  \int_0^1 \frac{W(x,y)}{k(y)} f(y)dy, \quad \forall x \in [0,1] \mbox{ and } f\in L^2[0,1]
\end{equation}
is a compact Hilbert-Schmidt operator. Following~\cref{def:RWLaplacian}, the continuum IVP has the form
\begin{subequations}\label{eq:IVP}
\begin{align}
\frac{\partial }{\partial t}w(x,t) &= \mathcal L^{rw} w(x,t),\\
w(x,0) &= g(x) \in L^2[0,1]. 
\end{align}
\end{subequations}

\begin{theorem}
	\label{thm:existence-unicity-sol}
Let $W \in \mathbb W$ be connected and  satisfying~\cref{hyp:bounded-away-from-0}. Then there exists a unique classical solution to the initial-value problem~\cref{eq:IVP}. 
\end{theorem}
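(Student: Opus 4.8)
The plan is to recognise \cref{eq:IVP} as a linear abstract Cauchy problem $\dot w = \mathcal L^{rw} w$, $w(0)=g$ on the Hilbert space $L^2[0,1]$ whose generator $\mathcal L^{rw}$ is a \emph{bounded} operator, and then to invoke the theory of uniformly continuous semigroups. The first step is therefore to make the boundedness explicit. We already know from \cref{hyp:bounded-away-from-0} that $K(x,y)=W(x,y)/k(y)$ is a Hilbert--Schmidt kernel, so the operator $\mathcal K$ of \cref{eq:compact-op-markov-kernel} is bounded with $\norm{\mathcal K}_{op}\leq \norm{K}_2<\infty$. Since the identity $I$ is trivially bounded, $\mathcal L^{rw}=\mathcal K-I$ is a bounded linear operator on $L^2[0,1]$ with $\dom(\mathcal L^{rw})=L^2[0,1]$.

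Given boundedness, I would construct the candidate solution directly through the operator exponential
\[
w(t)=e^{t\mathcal L^{rw}}g:=\sum_{n=0}^\infty \frac{t^n}{n!}\,(\mathcal L^{rw})^n g,
\]
whose defining series converges in operator norm for every $t\geq 0$ because $\sum_n t^n\norm{\mathcal L^{rw}}_{op}^n/n!=e^{t\norm{\mathcal L^{rw}}_{op}}<\infty$. The family $\{e^{t\mathcal L^{rw}}\}_{t\geq 0}$ is then a uniformly continuous semigroup with generator $\mathcal L^{rw}$. Existence follows by term-by-term differentiation of this norm-convergent series: the map $t\mapsto w(t)$ is continuously differentiable into $L^2[0,1]$ with $\dot w(t)=\mathcal L^{rw}e^{t\mathcal L^{rw}}g=\mathcal L^{rw}w(t)$ and $w(0)=g$, so $w$ is a classical solution. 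Because $\mathcal L^{rw}$ is bounded, no domain questions arise and $w(t)$ automatically lies in $\dom(\mathcal L^{rw})=L^2[0,1]$ for every $t$.

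For uniqueness I would take two classical solutions $w_1,w_2$ and set $v=w_1-w_2$, which solves the homogeneous problem $\dot v=\mathcal L^{rw}v$ with $v(0)=0$. Differentiating $\norm{v(t)}_2^2$ and applying Cauchy--Schwarz gives $\frac{d}{dt}\norm{v(t)}_2^2\leq 2\norm{\mathcal L^{rw}}_{op}\norm{v(t)}_2^2$, so Grönwall's inequality forces $\norm{v(t)}_2=0$ for all $t\geq 0$, whence $w_1=w_2$.

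Because the generator is bounded, there is essentially no deep obstacle here: the whole argument collapses to the elementary fact that a bounded operator produces a norm-convergent exponential, bypassing the Hille--Yosida machinery that would be needed for an unbounded generator. The only point demanding care is the verification that $K$ is genuinely Hilbert--Schmidt, which is precisely where \cref{hyp:bounded-away-from-0} enters: the remark preceding the theorem shows that without a lower bound on $k$ the kernel $K$ may fail to be square-integrable, in which case $\mathcal L^{rw}$ would cease to be bounded and the bounded-generator argument would break down.
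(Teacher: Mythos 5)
Your proposal is correct and follows essentially the same route as the paper: both arguments rest on the observation that \cref{hyp:bounded-away-from-0} makes $K$ a Hilbert--Schmidt kernel, hence $\mathcal L^{rw}=\mathcal K-\mathcal I$ a bounded operator generating the uniformly continuous semigroup $e^{t\mathcal L^{rw}}$. The only difference is that you carry out explicitly (term-by-term differentiation of the exponential series and a Gr\"onwall argument for uniqueness) the well-posedness steps that the paper delegates to Proposition~6.2 of Engel and Nagel.
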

 
\begin{proof}
The operator $\mathcal K$ is linear, and continuous hence bounded. It follows that $\mathcal L^{rw}$ is linear and bounded. Hence it is closed. 
Therefore, $\mathcal L^{rw}$ is the infinitesimal generator of the (uniformly and thus) strongly continuous semigroup
\begin{equation}
\mathcal T^{rw}(t) = e^{\mathcal L^{rw} t} := \sum_{\ell=0}^\infty \frac{t^\ell \left(  \mathcal L^{rw}     \right)^\ell}{\ell !}. 
\end{equation}
Proposition 6.2 in~\cite{engel2001one} allows to conclude. 
\end{proof}
\begin{remark}
\label{rmk:classical-solution}
(Classical solution) By definition of  classical solution of the abstract Cauchy problem~\cref{eq:IVP}, the orbit maps
$
	t\in \mathbb R^{+} \mapsto w(x,t) \in L^2[0,1]
$ 
are continuously differentiable. 
\end{remark}

\begin{remark}\label{rmk:steady-state}
The asymptotic steady state $w_\infty $ of~\cref{eq:IVP} follows from $\mathcal L^{rw} w_\infty = 0$  and is proportional to the degree, $w_\infty \propto k$.  
\end{remark}
\subsection{Positivity} \label{sec:positivity}
The continuum IVP \cref{eq:IVP} would loose physical relevance if its solution were to loose the   positivity of the initial condition, $w(\cdot,0) \geq 0$. Before we proceed to a proof of positivity, let us first introduce a notation. For $g \in L^\infty [0,1]$, and $1 \leq p \leq \infty$,  let  $\mathcal M_g: L^p[0,1] \rightarrow L^p[0,1]$ denote the multiplication operator defined by 
$ 
\mathcal M_g f(x) = g(x) f(x). 
$

\begin{proposition}
\label{prop:positivity-of-sol}
Let $W$ be a connected graphon satisfying \cref{hyp:bounded-away-from-0} and let $w(\cdot,0) = g \geq  0$ be  the initial condition of IVP \cref{eq:IVP}. Then the classical solution $w(x,t)$ of the IVP satisfies $w(\cdot,t) \geq 0$ for all $t\geq 0$. 
\end{proposition}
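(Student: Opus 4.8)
The plan is to show that the semigroup $\mathcal{T}^{rw}(t)=e^{\mathcal{L}^{rw}t}$ introduced in the proof of \Cref{thm:existence-unicity-sol} is a \emph{positive} operator for every $t\geq 0$, i.e.\ that it maps the nonnegative cone of $L^2[0,1]$ into itself; since the classical solution is the orbit map $w(\cdot,t)=\mathcal{T}^{rw}(t)g$, positivity of $g$ then propagates to all later times. The starting observation is the splitting $\mathcal{L}^{rw}=\mathcal{K}-I$, where $\mathcal{K}$ is the Hilbert--Schmidt operator of \cref{eq:compact-op-markov-kernel} and $I=\mathcal{M}_1$ is the identity (multiplication by the constant function $1$). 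Because $\mathcal{K}$ and $I$ commute, the semigroup factors as
$$
\mathcal{T}^{rw}(t)=e^{(\mathcal{K}-I)t}=e^{-t}\,e^{\mathcal{K}t},
$$
and the scalar prefactor $e^{-t}>0$ is harmless, so it suffices to prove that $e^{\mathcal{K}t}$ is positive.

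First I would record that $\mathcal{K}$ is itself positive: its kernel $K(x,y)=W(x,y)/k(y)$ is nonnegative almost everywhere (since $W\geq 0$ and, by \cref{hyp:bounded-away-from-0}, $k\geq c>0$), so $f\geq 0$ implies $\mathcal{K}f\geq 0$. Iterating, each power $\mathcal{K}^\ell$ has kernel $K^{\circ\ell}$, which by the formula for the operator product $W^{\circ n}$ is an integral of products of the nonnegative factors $K$, hence is again nonnegative almost everywhere; therefore every $\mathcal{K}^\ell$ is a positive operator. Consequently, for $t\geq 0$, each term of the series $e^{\mathcal{K}t}=\sum_{\ell=0}^\infty \frac{t^\ell}{\ell!}\mathcal{K}^\ell$ is positive, and for $f\geq 0$ every partial sum $S_N f=\sum_{\ell=0}^N \frac{t^\ell}{\ell!}\mathcal{K}^\ell f$ is nonnegative a.e.

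The remaining step --- the one I expect to be the only genuine subtlety --- is to pass from the nonnegativity of the partial sums to that of their limit. Since $\mathcal{K}$ is bounded, one has $\norm{\mathcal{K}^\ell}\leq\norm{\mathcal{K}}^\ell$ and the series converges in operator norm, so $S_N f\to e^{\mathcal{K}t}f$ in $L^2[0,1]$. The nonnegative cone $\{h\in L^2[0,1]:h\geq 0\ \text{a.e.}\}$ is norm-closed (equivalently, one may extract a subsequence of $S_N f$ converging pointwise a.e., whose limit is then nonnegative a.e.), so the $L^2$-limit $e^{\mathcal{K}t}f$ is nonnegative a.e. Combining this with the factorization gives $w(\cdot,t)=e^{-t}\,e^{\mathcal{K}t}g\geq 0$ for all $t\geq 0$, which is the claim.
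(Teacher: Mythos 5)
Your proof is correct, but it takes a genuinely different route from the paper's. You establish positivity of the semigroup algebraically: writing $\mathcal L^{rw}=\mathcal K-\mathcal I$, factoring $e^{(\mathcal K-\mathcal I)t}=e^{-t}e^{\mathcal K t}$, observing that $\mathcal K$ and all its powers preserve the nonnegative cone because the kernel $K^{\circ\ell}\geq 0$, and then passing to the limit of the partial sums using the fact that the nonnegative cone is norm-closed in $L^2[0,1]$ (each step here is sound; the subsequence-a.e.\ argument is exactly the right way to close the last gap). The paper instead runs a parabolic maximum-principle argument: it conjugates $\mathcal L^{rw}$ by the multiplication operators $\mathcal M_{1/k}$ and $\mathcal M_k$ to get an operator $\widetilde{\mathcal L}$ whose integral part has the stochastic normalization $\tfrac{1}{k(x)}\int_0^1 W(x,y)\,\cdot\,dy$, perturbs the transformed solution to $v=u+\epsilon t$, and shows by contradiction that $v$ cannot attain an interior-in-time minimum, so the minimum sits at $t=0$ where the data are nonnegative. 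The comparison is instructive. Your argument is shorter, stays entirely within the $L^2$ functional-analytic framework of the rest of the paper, and avoids the delicate point in the paper's proof of asserting that an $L^2$-valued function attains a pointwise minimum at some $(a,\tau)$ (which strictly speaking requires an essential-infimum formulation); it also makes transparent that only $K\geq 0$ a.e.\ and $\mathcal K$ bounded are needed, so positivity holds in the $L^1$ setting without \cref{hyp:bounded-away-from-0}, as the paper later claims. The paper's maximum-principle route, on the other hand, is the one that survives when the generator is nonlinear or time-dependent, where no exponential series is available, which is presumably why the authors chose it. Either proof is acceptable here; yours is the more economical one for the linear problem at hand.
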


\begin{proof}
Let us define $\widetilde{\mathcal L   } = \mathcal{M}_{1/k}    \mathcal L^{rw} \mathcal M_{k}$, yielding by a direct calculation
\begin{equation}\label{eq:new-L}
\widetilde{\mathcal L } f(x,t) = \frac{1}{k(x)} \int_0^1 W(x,y) f(y,t) dy-  f(x,t), \quad \forall f \in L^2[0,1].  
\end{equation}
Further let 
$u = \mathcal M_{1/k} w$ with $w(x,t)$ the solution of~\cref{eq:IVP}
such that 
$$\frac{\partial}{\partial t} u 
= \mathcal M_{1/k}  \frac{\partial}{\partial t}  w
= \mathcal M_{1/k}  \mathcal L^{rw} w 
= \mathcal M_{1/k}  \mathcal L^{rw} \mathcal M_k u 
= \widetilde{\mathcal L } u.$$
Since $w(\cdot,t) \geq 0 \iff u(\cdot,t) \geq 0$, it remains to prove the positivity of $u(\cdot, t)$.  Choose $\epsilon >0$ arbitrarily and let $v(x,t) = u(x,t) + \epsilon t$. Observe that $\widetilde{\mathcal L   } v = \widetilde{\mathcal L } u$,  and hence
$$
 \frac{\partial}{\partial t}  v - \widetilde{\mathcal L   } v
 =  \frac{\partial}{\partial t}  u + \epsilon - \widetilde{\mathcal L   } u 
 = \epsilon. 
$$
Let us show $v(x,t)$ reaches its minimum at some $(a,0)$, $a \in [0,1]$. Assume by contradiction that there exists $(a,\tau) \in [0,1] \times (0,T)$ for some $T >0$ such that $v(x,t) \geq v(a,\tau)$ for all $x$ and $t$. It follows that 
\begin{align*}
\widetilde{\mathcal L   } v(a,\tau) & = \frac{1}{k(a)} \int_0^1W(a,y) v(y,\tau) dy - v(a,\tau) \geq  \frac{v(a,\tau)}{k(a)} \int_0^1W(a,y)  dy - v(a,\tau) = 0. \nonumber \\
\end{align*}
Hence,  $\frac{\partial}{\partial t} v(a,\tau) = \widetilde{\mathcal L   } v(a,\tau)  + \epsilon = \epsilon >0$ which is in contradiction with the assumption of $v$ attaining its minimum in $(a,\tau)$ with $\tau >0$, so $\tau  = 0$. 
We have thus proved $v(x,t) \geq v(a,0)$, so that 
$$
u(x,t) + \epsilon t = v(x,t) \geq v(a,0) = u(a,0) = \frac{g(a)}{k(a)} \geq 0. 
$$
Since $\epsilon$ is arbitrary, this allows to conclude. 
\end{proof}

\subsection{The IVP with probability density functions}
Let us observe  that when $w(\cdot,t)$ in~\cref{eq:IVP} is a probability density function,   it is natural to consider $w(\cdot,t) \in L^1[0,1]$, and one may define $\mathcal L^{rw}$ as a mapping $L^1[0,1] \rightarrow L^1[0,1]$. Indeed, as in \cref{eq:compact-op-markov-kernel} let us still write  $\mathcal K $ the integral part of $\mathcal L^{rw}$ defined on $L^1[0,1]$.  By Fubini-Tonelli, the operator norm $\Vert \mathcal K \Vert_{1,1} := \norm*{\mathcal K}_{L^1[0,1] \rightarrow L^1[0,1]}$ satisfies
\begin{align}
 \Vert \mathcal K \Vert_{1,1} &  \leq \sup_{||f||_1 =1} \int_{[0,1]^2} \abs*{K(x,y)f(y)} dx dy = \sup_{||f||_1 =1} \int_{[0,1]} \abs*{f(y)} dy = 1. 	
\end{align}
This, combined with the fact that $\norm{\mathcal K f}_1 = 1$ if $f = 1$,  shows that ${\norm*{\mathcal K}_{1,1}  = 1}$, and so even without~\cref{hyp:bounded-away-from-0}, $\mathcal L^{rw}$ is a bounded mapping of $L^1[0,1]$ into itself. Additionally,   \cref{thm:existence-unicity-sol} about the existence and unicity of a solution to the IVP  has a similar formulation and proof in the present case. 
Further,  the positivity  established in \cref{sec:positivity}  also applies here, and this would still  not require~\cref{hyp:bounded-away-from-0}. The only significant change in the proof of~\cref{prop:positivity-of-sol} would be to use the   auxiliary operator $ \mathcal L^{rw} \mathcal M_{k}$ instead of $\widetilde{\mathcal L   } = \mathcal{M}_{1/k}    \mathcal L^{rw} \mathcal M_{k}$. When ${w(\cdot, 0) \geq 0}$  we further have conservation of the $L^1$ norm : 
\begin{equation}\label{eq:density-conservation}
\frac{\partial}{\partial t} \norm{w(\cdot,t)}_1 = \frac{\partial}{\partial t} \int_0^1 \abs{w(x,t)} dx  =  \frac{\partial}{\partial t} \int_0^1 {w(x,t)} dx  = 0. 
\end{equation}

In the remainder of the paper, for the sake of simplicity and in order to benefit from the Hilbert space framework at a later stage, we will however assume that  $W$ satisfies~\cref{hyp:bounded-away-from-0}. This allows to define  $\mathcal L^{rw}$ as an operator acting on   $L^2[0,1]$ and we do not use $L^1$ but  rather the stronger $L^2$ norms also present in  other works about dynamics on graphons~\cite{medvedev2014nonlinearDENSE, medvedev2018kuramoto}. 

\section{Convergence on dense   graphs}
\label{sec:Convergence-on-dense-weighted-graphs}

This section is divided in three parts. The first two parts show that the solution of the discretized problem on $W/\mathcal P$ or $W_{[n]} $  converges to that of the continuum IVP in norm $\norm{\cdot}_{C([0,T],L^2[0,1])}$ for any $T>0$. 
The goal of the  third part is to prove that the discrete problem can be approximated by its continuum version.

\subsection{Convergence on the quotient graph $W/ \mathcal P$}
\label{sec:convergence-on-quotient-graph}
Let us start with two simple lemmas. 
\begin{lemma} \label{lemma:step-kernel-on-step-graphon}Let $\mathcal A_\eta : L^2[0,1] \rightarrow L^2[0,1]$ be an integral operator with bounded kernel  $A_\eta$. Assume that $A_\eta$ is a.e.-constant on every cell $P_i \times P_j$ of the uniform partition of $[0,1]^2$. Further let $f \in L^2[0,1]$ and define $f_\eta$ by 
	\begin{displaymath}
	f_\eta(x) = n \sum_{i = 1}^n \int_{P_i} f(y) dy \chi_{P_i} (x), \quad \forall x \in [0,1]. 
	\end{displaymath}
	Then for all $\ell \in \mathbb{N}_0$, it holds that
	$
	\mathcal A_\eta^{\ell} f = \mathcal A_\eta^\ell f_\eta. 
	$
\end{lemma}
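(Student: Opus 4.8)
The plan is to isolate the single substantive fact that drives the statement: the operator $\mathcal A_\eta$ cannot distinguish a function $f$ from its cell-averaged version $f_\eta$, i.e. $\mathcal A_\eta f = \mathcal A_\eta f_\eta$ for every $f \in L^2[0,1]$. Once this identity is in hand, the claim for arbitrary $\ell$ follows immediately, without induction, by applying the bounded operator $\mathcal A_\eta^{\ell-1}$ to both sides: $\mathcal A_\eta^\ell f = \mathcal A_\eta^{\ell-1}(\mathcal A_\eta f) = \mathcal A_\eta^{\ell-1}(\mathcal A_\eta f_\eta) = \mathcal A_\eta^\ell f_\eta$. All powers here are well defined as bounded operators on $L^2[0,1]$ because $A_\eta$ is a bounded kernel on a finite measure space, so the only genuine content of the lemma is the case $\ell = 1$.

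To prove $\mathcal A_\eta f = \mathcal A_\eta f_\eta$, I would write $a_{ij}$ for the a.e.-constant value of $A_\eta$ on the cell $P_i \times P_j$. Fixing $x \in P_i$ and splitting the integral over the partition gives $\mathcal A_\eta f(x) = \sum_{j=1}^n \int_{P_j} A_\eta(x,y) f(y)\,dy = \sum_{j=1}^n a_{ij} \int_{P_j} f(y)\,dy$, which exhibits the crucial point: for $x \in P_i$ the value $\mathcal A_\eta f(x)$ depends on $f$ only through its cell integrals $\int_{P_j} f$. By construction $f_\eta$ is constant on each $P_j$ with value $n\int_{P_j} f$, and since $\mu(P_j) = 1/n$ this yields $\int_{P_j} f_\eta = \int_{P_j} f$ for every $j$. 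Substituting into the same expansion gives $\mathcal A_\eta f_\eta(x) = \sum_{j=1}^n a_{ij}\int_{P_j} f_\eta(y)\,dy = \sum_{j=1}^n a_{ij}\int_{P_j} f(y)\,dy = \mathcal A_\eta f(x)$. Equivalently, $f - f_\eta$ has vanishing integral on every cell and is therefore annihilated by the cell-constant kernel.

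I do not expect a real obstacle here: the lemma is elementary, and the only thing requiring care is the bookkeeping that $\int_{P_j} f_\eta = \int_{P_j} f$ together with the fact that the constant-on-cells hypothesis lets $a_{ij}$ come out of each cellwise integral. The one point worth flagging is that the statement is for $\ell \geq 1$ — the symbol $\mathbb{N}_0$ must here denote the nonzero naturals, since the identity fails for $\ell = 0$, where $\mathcal A_\eta^0 = I$ and $f \neq f_\eta$ in general — and the reduction above exploits $\ell \geq 1$ precisely so that at least one copy of $\mathcal A_\eta$ is available to absorb the averaging. It is also worth remarking, though not needed for the argument, that $\mathcal A_\eta g$ is itself constant on each cell for any $g$, so $\mathcal A_\eta$ maps $L^2[0,1]$ into the subspace of step functions; this is the structural reason the identity propagates cleanly through the powers.
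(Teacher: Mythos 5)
Your proposal is correct and follows essentially the same route as the paper: the paper also reduces the general case to $\ell = 1$ via $\mathcal A_\eta^{\ell}f = \mathcal A_\eta^{\ell-1}\mathcal A_\eta f_\eta = \mathcal A_\eta^{\ell}f_\eta$, and merely cites a reference for the direct calculation in the base case, which you instead write out (correctly) using $\int_{P_j} f_\eta = \int_{P_j} f$ and the cell-constancy of the kernel. Your remark that the identity requires $\ell \geq 1$ is a fair reading of the paper's use of $\mathbb{N}_0$.
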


\begin{proof}
The proof  in the case $\ell = 1$ follows from a direct calculation, see for instance~\cite{gao2017control}, lemma~3. The claim for $\ell>1$ is   a direct consequence since then
\begin{displaymath}
\mathcal A_\eta^{\ell}f =\mathcal A_\eta^{\ell-1}\mathcal A_\eta f =  \mathcal A_\eta^{\ell-1}\mathcal A_\eta f_\eta = \mathcal A_\eta^{\ell}f_\eta. 
\end{displaymath}
\end{proof}

\begin{lemma}\label{lemma:difference-HS-operators} Let $\mathcal A, \mathcal B : L^2[0,1] \rightarrow L^2[0,1]$  be two Hilbert-Schmidt integral operators with respective  kernels $A$ and $B$ defined on the unit square, with $A \leq \beta$ for some constant $\beta >0$. Then, for all $f \in L^2[0,1]$ and $\ell \in \mathbb N_0$
	\begin{displaymath}
	\norm{\mathcal A^\ell f - \mathcal B^\ell f}_2 \leq \beta^{\ell-1} \norm{A-B}_2 \norm{f}_2 
	+ \norm{(\mathcal A^{\ell-1} - \mathcal B^{\ell -1}) \mathcal B f}_2 . 
	\end{displaymath}
\end{lemma}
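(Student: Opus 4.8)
The plan is to reduce the claim to the single-step difference $\mathcal{A}-\mathcal{B}$ by the standard telescoping (``add and subtract'') device, and then to control the resulting terms with two elementary bounds for Hilbert–Schmidt operators. Concretely, I would first record, for $\ell \geq 1$, the algebraic identity
$$\mathcal{A}^\ell f - \mathcal{B}^\ell f = \mathcal{A}^{\ell-1}(\mathcal{A}-\mathcal{B})f + (\mathcal{A}^{\ell-1}-\mathcal{B}^{\ell-1})\mathcal{B}f,$$
which is verified by adding and subtracting $\mathcal{A}^{\ell-1}\mathcal{B}f$. The triangle inequality in $L^2[0,1]$ then gives
$$\norm{\mathcal{A}^\ell f - \mathcal{B}^\ell f}_2 \leq \norm{\mathcal{A}^{\ell-1}(\mathcal{A}-\mathcal{B})f}_2 + \norm{(\mathcal{A}^{\ell-1}-\mathcal{B}^{\ell-1})\mathcal{B}f}_2.$$
The second summand is exactly the last term appearing in the statement, so the whole task reduces to bounding the first summand by $\beta^{\ell-1}\norm{A-B}_2\norm{f}_2$.

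For that first summand I would use submultiplicativity of the operator norm, $\norm{\mathcal{A}^{\ell-1}(\mathcal{A}-\mathcal{B})f}_2 \leq \norm{\mathcal{A}}_{\mathrm{op}}^{\,\ell-1}\,\norm{(\mathcal{A}-\mathcal{B})f}_2$, and estimate the two factors separately. The operator $\mathcal{A}-\mathcal{B}$ is again an integral operator, with kernel $A-B$, so the Hilbert–Schmidt bound $\norm{\mathcal{A}-\mathcal{B}}_{\mathrm{op}} \leq \norm{\mathcal{A}-\mathcal{B}}_{\mathrm{HS}} = \norm{A-B}_2$ yields $\norm{(\mathcal{A}-\mathcal{B})f}_2 \leq \norm{A-B}_2\,\norm{f}_2$. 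For the factor $\norm{\mathcal{A}}_{\mathrm{op}}^{\,\ell-1}$ I would invoke the hypothesis $A \leq \beta$: since the kernels arising here are nonnegative (they are built from graphons), one has $0 \leq A \leq \beta$ on $[0,1]^2$, hence $A^2 \leq \beta^2$ and $\norm{A}_2 \leq \beta$ on the unit square, so that $\norm{\mathcal{A}}_{\mathrm{op}} \leq \norm{\mathcal{A}}_{\mathrm{HS}} = \norm{A}_2 \leq \beta$ (equivalently, since $\int_0^1 A(x,y)\,dy \leq \beta$ and $\int_0^1 A(x,y)\,dx \leq \beta$, Schur's test on the finite measure space $[0,1]$ gives the same conclusion). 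Multiplying the two estimates produces $\norm{\mathcal{A}^{\ell-1}(\mathcal{A}-\mathcal{B})f}_2 \leq \beta^{\ell-1}\norm{A-B}_2\,\norm{f}_2$, which, combined with the triangle inequality above, is precisely the asserted bound.

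I would finally check that the boundary case $\ell=1$ is consistent: there $\beta^0 = 1$ and $\mathcal{A}^0 = \mathcal{B}^0 = I$, so the last term vanishes and the inequality collapses to the plain Hilbert–Schmidt estimate $\norm{\mathcal{A}f - \mathcal{B}f}_2 \leq \norm{A-B}_2\,\norm{f}_2$. The only delicate point — the ``hard part'' such as it is — is the passage from the one-sided hypothesis $A \leq \beta$ to the operator bound $\norm{\mathcal{A}}_{\mathrm{op}} \leq \beta$; this relies on the nonnegativity of the kernel, implicit in the graphon setting, together with the finiteness of the measure of $[0,1]$. Everything else is a routine combination of the telescoping identity, the triangle inequality, and submultiplicativity of the operator norm.
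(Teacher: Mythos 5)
Your proof is correct and follows essentially the same route as the paper: the same add-and-subtract decomposition of $\mathcal A^\ell f - \mathcal B^\ell f$, the triangle inequality, and the bound $\norm{\mathcal A^{\ell-1}} \le \norm{A}_2^{\ell-1} \le \beta^{\ell-1}$ (the paper phrases this last step via the composed kernel $A^{\circ(\ell-1)}$ and the estimate $\norm{A^{\circ(\ell-1)}}_2 \le \norm{A}_2^{\ell-1}$ rather than operator-norm submultiplicativity, but the content is identical). Your explicit observation that the one-sided hypothesis $A \le \beta$ must be combined with nonnegativity of the kernel to yield $\norm{A}_2 \le \beta$ is a point the paper leaves implicit, and is worth keeping.
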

 
\begin{proof}
Using the Minkowski inequality, we have 
\begin{align} \label{eq:Al-1-A-B}
	\norm{\mathcal A^\ell f - \mathcal B^\ell f}_2 & = \norm{\mathcal A^{\ell -1} \mathcal A f - \mathcal B^{\ell-1} \mathcal B f }_2    \nonumber\\
	&=  \norm{\mathcal A^{\ell -1} \mathcal A f  - \mathcal A^{\ell-1} \mathcal B f + \mathcal A^{\ell-1} \mathcal B f - \mathcal B^{\ell-1} \mathcal B f }_2  \nonumber \\
	&=  \norm{\mathcal A^{\ell -1} (\mathcal A f - \mathcal B f)    +( \mathcal A^{\ell-1} - \mathcal B^{\ell-1}) \mathcal B f }_2  \nonumber \\
	& \leq  \norm{\mathcal A^{\ell -1} (\mathcal A f - \mathcal B f) }_2  
	+\norm{( \mathcal A^{\ell-1} - \mathcal B^{\ell-1}) \mathcal B f }_2. 
\end{align}	
Now $\mathcal A^{\ell-1}$ is a Hilbert-Schmidt integral operator with kernel     $A^{\circ (\ell - 1)}$. For such operator, as a product of the Cauchy-Schwarz inequality it is known about the operator norm $\norm{\cdot} $   that 
$
\norm{\mathcal A^{\ell-1}} \leq \norm{A^{\circ (\ell - 1)}}_2
$
, or equivalently
\begin{equation}\label{eq:normHSoperator-norm2}
\norm{\mathcal A^{\ell-1} f}_2 \leq \norm{A^{\circ (\ell - 1)}}_2  \norm{f}_2. 
\end{equation}
The first term in the right hand side of~\cref{eq:Al-1-A-B} therefore satisfies
\begin{equation}\label{eq:A-B-f}
\norm{\mathcal A^{\ell -1} (\mathcal A f - \mathcal B f) }_2   
\leq \norm{A^{\circ(\ell -1)}}_2 \norm{\mathcal A f- \mathcal B f}_2 
\leq \beta^{\ell-1}  \norm{\mathcal A f- \mathcal B f}_2
\end{equation}
where we use  $\norm{A^{\circ(\ell -1)}}_2 \leq \norm{A}_2^{\ell-1}$ (\cite{gao2017control}, lemma 6)  and  $A(x,y) \leq \beta$ for all $0\leq x, y \leq 1$ to obtain the last inequality. Using again \cref{eq:normHSoperator-norm2} with $\ell = 2$, we also have
$
\norm{\mathcal A f-\mathcal B f}_2 \leq \norm{A-B}_2  \norm{f}_2
$
which, together with \cref{eq:Al-1-A-B} and \cref{eq:A-B-f} leads to the conclusion.  
\end{proof}

Now we are in a place to formulate the convergence  results. The continuous formulation of the discrete problem associated to \cref{eq:IVP} on the quotient graph reads\footnote{The subscript  $\square$   refers  to fact that the averaging is performed on square cells of $[0,1]^2$. To lighten the notations, we do not refer explicitly to the number of nodes of the graph, so we write $u(x,t)$ instead of, for instance,  $u^{(n)}(x,t)$.  }
\begin{subequations}\label{eq:IVP-quotient-graph}
	\begin{align}
	\frac{\partial }{\partial t}u(x,t) &= \mathcal L_\square ^{rw} u(x,t)\\
	u(x,0) &= g_\square(x)  
	\end{align}
\end{subequations}
where the random walk  Laplacian operator on $W/\mathcal P$ satisfies
\begin{equation}
\mathcal L_{\square} ^{rw} f(x)  = \int_0^1 \frac{\eta \left(   W / \mathcal P   \right)(x,y)}{k_\square (y)} f(y) dy - f(x), \quad \forall f \in L^2[0,1], 
\end{equation}
and the initial condition is averaged on each cell of the partition as 
\begin{equation}\label{eq:initial-cond-on-quotient-graph}
g_\square(x) = n\sum_{i = 1}^n \int_{P_i} g(y) dy \chi_{P_i}(x), \quad \forall x \in [0,1]. 
\end{equation}
Based on  the following proposition, operator $\mathcal L^{rw}_\square$ is well-defined. 
\begin{proposition}\label{prop:deg-of-connected-graphon}
Let $W$ be a connected graphon satisfying \cref{hyp:bounded-away-from-0}, then the strength of every node of the quotient graph determined by the partition $\mathcal P = \left\{ P_1,\ldots, P_n  \right\}$ of $[0,1]$  is positive. 
\end{proposition}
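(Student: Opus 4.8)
The plan is to compute the strength of each node of the quotient graph $W/\mathcal{P}$ explicitly in terms of the degree function $k$ of the original graphon, and then invoke \cref{hyp:bounded-away-from-0} to conclude positivity.

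First I would fix a node index $i \in \{1,\ldots,n\}$ and write its strength as the sum of the weights of all incident edges, $\str(v_i) = \sum_{j=1}^n A_{ij}$, where the adjacency matrix of $W/\mathcal{P}$ is given by \cref{eq:quotient-graph}. Substituting that expression yields $\str(v_i) = \sum_{j=1}^n n^2 \int_{P_i}\int_{P_j} W(x,y)\, dx\, dy$. Since the sum over $j$ is finite, summation and integration interchange freely, and because $\{P_1,\ldots,P_n\}$ is a partition of $[0,1]$ the sum of the inner integrals over the $P_j$ collapses to a single integral over the whole unit interval. Recognizing the inner integral as the degree function gives the compact form
\begin{equation*}
\str(v_i) = n^2 \int_{P_i}\left(\int_0^1 W(x,y)\, dy\right) dx = n^2 \int_{P_i} k(x)\, dx.
\end{equation*}

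Next I would apply \cref{hyp:bounded-away-from-0}, which provides a constant $c>0$ with $k(x) \geq c$ for all $x\in[0,1]$. Since $\mu(P_i) = 1/n$, this bounds the strength from below by $\str(v_i) \geq n^2 \cdot c \cdot \tfrac{1}{n} = nc > 0$, valid for every $i$, which is exactly the claim.

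I do not expect any genuine obstacle here: the result is a direct computation, and the only conceptual point is the observation that the strength of a cell-node is, up to positive constants, the integral of the degree function over the corresponding cell $P_i$, a quantity that \cref{hyp:bounded-away-from-0} controls uniformly. I would note in passing that connectedness of $W$ is not strictly needed for this statement, since the assumption $k\geq c>0$ already subsumes the weaker conclusion $k>0$ a.e.\ of \cref{prop:connected-graphon}; it is retained only to match the standing hypotheses under which $\mathcal{L}^{rw}_\square$ is discussed.
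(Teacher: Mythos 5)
Your computation is correct and is essentially the paper's own argument: the paper routes the same calculation through the degree function $k_\square$ of the step graphon (showing $k_\square(x) = n\int_{P_i} k$ and hence $k_\square \geq c$, with $\str(v_i) = n\,k_\square$), which is identical to your direct identity $\str(v_i) = n^2\int_{P_i} k(x)\,dx \geq nc$. Your side remark that connectedness is not needed here is also accurate, since the paper's proof likewise uses only \cref{hyp:bounded-away-from-0}.
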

\begin{proof}
The strength of the $i$-th node $v_i$, $i = 1,\ldots, n$,  is given by $\str{(v_i)} = n k_\square (x)$, for every $x \in   P_i$. We have 
\begin{equation}
k_\square (x) = \int_0^1 \sum_{j=1}^n A_{ij} \chi_{P_j}(y)dy = \frac{1}{n} \sum_{j = 1}^n A_{ij}, \quad \forall x \in P_i, 
\end{equation} 
where $A_{ij}$ was defined by~\cref{eq:quotient-graph}. Hence, 
\begin{align*}
k_\square (x)  = n \int_{P_i} \sum_{j = 1}^n \int_{P_j} W(x',y')dy'dx' 
                     = n \int_{P_i} \int_0^1  W(x',y')dy'dx' 
                     = n \int_{P_i}k(x')dx', 
\end{align*}
showing $k_\square (x) \geq c$
where $c>0$ is the   constant from~\cref{hyp:bounded-away-from-0}. 
\end{proof}
\begin{remark}\label{rmk:solution-on-quotient-graph}
It follows that the finite-dimensional IVP~\cref{eq:IVP-quotient-graph} on the quotient graph has a unique solution given by $e^{t \mathcal L_\square}g_\square$. 
\end{remark}

\begin{theorem} [Convergence with $W/\mathcal P$]
	\label{thm:convergence-quotient-graph} Let $W$ be a connected graphon satisfying \cref{hyp:bounded-away-from-0}, and let   $w(x,t)$ be the solution of  IVP~\cref{eq:IVP}. Further let $u(x,t)$ be the solution of the associated discrete problem~\cref{eq:IVP-quotient-graph}. Then for all $t \in \mathbb R^+$ it holds that 
	\begin{displaymath}
	\norm{u(\cdot,t) - w(\cdot,t)}_2 \rightarrow 0 \quad \mbox{ as } \quad n \rightarrow \infty. 
	\end{displaymath}
\end{theorem}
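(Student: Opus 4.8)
The plan is to compare the two solutions through their semigroup representations, $w(\cdot,t) = e^{t\mathcal L^{rw}}g$ and $u(\cdot,t) = e^{t\mathcal L_\square^{rw}}g_\square$, and to reduce the estimate to a comparison of the underlying Hilbert-Schmidt kernels together with a comparison of the initial data. Writing $\mathcal K$ and $\mathcal K_\square$ for the integral parts (with kernels $K = W/k$ and $K_\square = \eta(W/\mathcal P)/k_\square$), and using that $\mathcal L^{rw} = \mathcal K - I$ and $\mathcal L_\square^{rw} = \mathcal K_\square - I$ differ only by the same identity term, the common factor $e^{-t}$ pulls out and it suffices to control $\norm{e^{t\mathcal K}g - e^{t\mathcal K_\square}g_\square}_2$.

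First I would expand both exponentials as power series and apply the triangle (Minkowski) inequality termwise, so that for each $\ell$ the difference $\mathcal K^\ell g - \mathcal K_\square^\ell g_\square$ must be estimated. I would split this as $(\mathcal K^\ell g - \mathcal K^\ell g_\square) + (\mathcal K^\ell g_\square - \mathcal K_\square^\ell g_\square)$. The first piece is bounded using $\norm{\mathcal K^\ell}\le\norm{K^{\circ\ell}}_2\le\norm{K}_2^\ell$ together with $\norm{g - g_\square}_2\to 0$ (this convergence of the cell-averaged initial data to $g$ in $L^2$ is standard, as $g_\square$ is the conditional expectation of $g$ on the partition $\sigma$-algebra). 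The second piece is exactly what \Cref{lemma:difference-HS-operators} is built for: iterating it in $\ell$, and using that $K_\square$ is bounded (by \cref{prop:deg-of-connected-graphon} we have $k_\square\ge c$, so $K_\square\le 1/c =: \beta$ is a legitimate pointwise bound playing the role of the constant there), reduces the $\ell$-th term to a geometric-type sum with a leading factor $\norm{K - K_\square}_2$.

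The key analytic input is therefore that $\norm{K - K_\square}_2 \to 0$ as $n\to\infty$. Here I would use that $\eta(W/\mathcal P) \to W$ pointwise a.e. (Lemma 3.2 of \cite{borgs2008convergent}, cited in the excerpt) and that $k_\square(x) = n\int_{P_i}k(x')\,dx'$ is the cell-average of $k$, hence converges to $k$ a.e. and stays bounded below by $c$; dominated convergence then gives $L^2$ convergence of the quotient kernels, since all denominators are uniformly bounded away from zero by $c$ and all numerators are bounded by $1$. Finally I would assemble the termwise bounds: each term in the series is dominated, uniformly in $n$, by something of the form $\tfrac{t^\ell}{\ell!}\cdot\ell\beta^{\ell-1}\bigl(\norm{K-K_\square}_2 + C\norm{g-g_\square}_2\bigr)$, whose sum over $\ell$ converges (to a multiple of $t\,e^{\beta t}$) and is independent of $n$; this lets me factor out the two vanishing quantities and pass to the limit, obtaining convergence for each fixed $t$.

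The main obstacle I anticipate is the bookkeeping in the induction on $\ell$ coming from \Cref{lemma:difference-HS-operators}: because that lemma leaves a residual term $\norm{(\mathcal K^{\ell-1}-\mathcal K_\square^{\ell-1})\mathcal K_\square g_\square}_2$, unwinding the recursion produces a sum of $\ell$ comparable contributions, and I must check that the resulting coefficients still assemble into a convergent series after multiplication by $t^\ell/\ell!$. The delicate point is to keep the $n$-dependence confined entirely to the two factors $\norm{K-K_\square}_2$ and $\norm{g-g_\square}_2$ while every other constant ($\beta = 1/c$, the series bounds) is uniform in $n$; granting that separation, the interchange of the limit in $n$ with the infinite sum is justified by the uniform summable majorant, and the theorem follows for each fixed $t$. (Strengthening to $\norm{\cdot}_{C([0,T],L^2)}$ as promised in the section overview is immediate, since all bounds are monotone and continuous in $t$ on $[0,T]$.)
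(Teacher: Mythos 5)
Your proposal is correct, and its overall architecture matches the paper's proof: expand both semigroups as power series, reduce each term to a comparison of powers of the integral operators $\mathcal K$ and $\mathcal K_\square$, iterate \cref{lemma:difference-HS-operators} to extract a single factor of $\norm{K-K_\square}_2$, and finish with a summable majorant uniform in $n$ plus the two vanishing quantities $\norm{K-K_\square}_2$ and $\norm{g-g_\square}_2$. You diverge from the paper in three sub-steps, each a mild simplification. First, since $\mathcal K$ commutes with the identity you factor out $e^{-t}$ and work with $e^{t\mathcal K}$ directly, whereas the paper expands $(\mathcal K-\mathcal I)^k$ by Newton's binomial theorem; this spares you the binomial coefficients and the identity $\sum_{m=0}^{k-1}\binom{k}{m}(k-m)=k2^{k-1}$, and yields a slightly better constant ($e^{\beta t}$ in place of $e^{2\beta t}$). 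Second, you reconcile the mismatched initial data by adding and subtracting $\mathcal K^\ell g_\square$ and absorbing $\norm{\mathcal K^\ell(g-g_\square)}_2$ into a separate convergent series, whereas the paper invokes \cref{lemma:step-kernel-on-step-graphon} to replace $\mathcal K_\square^{\ell}g_\square$ by $\mathcal K_\square^{\ell}g$ outright; your route does not use the piecewise-constant structure of the step kernel and is therefore the one that generalizes to \cref{thm:convergence-Gn}, where that lemma is unavailable. Third, for $\norm{K-K_\square}_2\to 0$ you apply dominated convergence directly to the quotient kernels (legitimate, since the numerators are bounded by $1$ and the denominators by $c$ from below, the latter by \cref{prop:deg-of-connected-graphon}), whereas the paper first proves the algebraic bound $\norm{K_\square-K}_2^2\le 2\beta^2\norm{W-\eta(W/\mathcal P)}_2^2$; your version is shorter, the paper's makes the quantitative dependence on $\norm{W-\eta(W/\mathcal P)}_2$ explicit. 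One small imprecision: the constant in the iterated lemma must dominate both the pointwise bound on $K_\square$ and $\norm{K}_2$ (the residual term ends with $\mathcal K^{\ell-1}$), but under \cref{hyp:bounded-away-from-0} both are at most $1/c$, so your choice $\beta=1/c$ is in fact sufficient.
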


\begin{proof} Using~\cref{rmk:solution-on-quotient-graph}, by the Minkowski inequality we have 
\begin{align}
\norm{u(\cdot,t)-w(\cdot,t)}_2 & = \norm*{e^{t \mathcal L_\square}g_\square - e^{t \mathcal L}g}_2 \nonumber \\
& = \norm*{\sum_{k = 0}^\infty \frac{t^k}{k!}  \mathcal L_\square^k g_\square - \sum_{k = 0}^\infty \frac{t^k}{k!}  \mathcal L^k g  }_2 \nonumber\\
& \leq \norm{g_\square - g}_2 +\sum_{k = 1}^\infty  \frac{t^k}{k!} \underbrace{ \norm*{   \mathcal L_\square^k g_\square -   \mathcal L^k g  }_2}_{(*)} . \label{eq:u-w-1}
\end{align}
Let us write $\mathcal L^{rw} = \mathcal K - \mathcal I$ where $\mathcal K$ is the operator previously defined in~\cref{eq:compact-op-markov-kernel} and $\mathcal I$ is the identity operator. We have a similar decomposition $\mathcal L_\square^{rw} = \mathcal K_\square - \mathcal I$ for the Laplacian of the step graphon.  For $k \geq 1$ and $0 \leq m \leq k$ let us write $\alpha_{mk} = (-1)^m\binom{k}{m}$, and consider $(*)$ in the right-hand side of~\cref{eq:u-w-1}. Using  Newton's binomial theorem we have
\begin{align}\label{eq:Letak-Lk}
\norm*{   \mathcal L_\square^k g_\square -   \mathcal L^k g  }_2  & 
= \norm*{ (\mathcal K_\square - \mathcal I)^k g_\square -  ( \mathcal K^k-\mathcal I) g}_2 \nonumber \\
&= \norm*{ \sum_{m = 0}^k \alpha_{mk}\mathcal K_\square^{k-m}  g_\square - \sum_{m = 0}^k \mathcal K^{k-m} g}_2 \nonumber \\
& \leq \norm*{ \sum_{m = 0}^{k-1} \alpha_{mk}\left(\mathcal K_\square^{k-m}  g_\square  - \mathcal K^{k-m} g\right)  }_2 + \norm*{\alpha_{kk} \left(g_\square - g\right)}_2\nonumber \\
\intertext{with $\vert \alpha_{mk} \vert  = \binom{k}{m}$ and using \cref{lemma:step-kernel-on-step-graphon},}
& \leq   \sum_{m = 0}^{k-1} \binom{k}{m}\underbrace{ \norm*{ \left(\mathcal K_\square^{k-m}   - \mathcal K^{k-m} \right) g  }_2}_{(**)} + \norm*{ \left(g_\square - g\right)}_2. 
\end{align}
By~\cref{hyp:bounded-away-from-0}  and \cref{prop:deg-of-connected-graphon} there exists some constant $c >0$ such that $k_\square(y) \geq c$ for all $y \in [0,1]$. Further, $0 \leq W \leq 1$ on $[0,1]^2$, and so
\begin{equation}
\norm{K_\square}_2  = \norm*{\frac{\eta(W/\mathcal P)}{k}}_2 = \norm*{\eta (W/\mathcal P)}_2  \norm*{ \frac 1 k_\square}_2  \leq \frac 1 c =: \beta_\square,  
\end{equation}
where $K_\square$ denotes the integral kernel of $\mathcal K_\square$. 
For $\ell \in \mathbb{N}_0$, let us define $\mathcal E_\ell := \mathcal K_\square^{\ell}   - \mathcal K^{\ell} $ and $E_\ell :=   K_\square^{\ell}   -   K^{\ell}$.   Then, applying \cref{lemma:difference-HS-operators} successively $\ell-1$ times to $(**)$ in~\cref{eq:Letak-Lk} with $\ell = k-m$, we obtain
\begin{align}\label{eq:double-star-after-applying-lemma}
\norm{\mathcal E_\ell g}_2 
& \leq \beta_\square^{\ell-1} \norm{E_1}_2 \norm{g}_2 + \norm{\mathcal E_{\ell-1} \mathcal K g}_2 \nonumber\\
& \leq \left(\beta_\square^{\ell-1} +\beta_\square^{\ell-2} \right)\norm{E_1}_2 \norm{g}_2  + \norm{\mathcal E_{\ell-2} \mathcal K^2 g}_2 \nonumber\\
& \, \, \vdots \nonumber \\
&  \leq \left( \sum_{j=1}^{\ell -1}\beta_\square^{\ell -j}  \right)\norm{E_1}_2 \norm{g}_2 +  \norm{\mathcal E_{1} \mathcal K^{\ell  -1} g}_2, \nonumber\\
\intertext{and since $E_\ell$ is the kernel of $\mathcal E_\ell$ if $\ell = 1$, }
&  \leq \left( \sum_{j=1}^{\ell -1}\beta_\square^{\ell -j}  \right)\norm{E_1}_2 \norm{g}_2 +  \norm{ E_{1}}_2   \norm{ \mathcal K^{\ell  -1} g}_2 \nonumber\\
\intertext{and with $\norm{\mathcal K^{\ell -1}g}_2 \leq \norm{K^{\circ (\ell -1)}}_2 \norm{g}_2 \leq \norm{K}_2 ^{\ell - 1}  \norm{g}_2$,}
&  \leq \left(\norm{K}_2^{\ell-1} + \sum_{j=1}^{\ell -1}\beta_\square^{\ell -j}  \right)\norm{E_1}_2 \norm{g}_2 \nonumber\\
& \leq \ell \beta^{\ell-1}\norm{E_1}_2 \norm{g}_2, 
\end{align}
where the last inequality stems from $\beta: = \max \left\{  \norm{K}_2,\beta_\square \right\} \geq 1$. Combining \cref{eq:Letak-Lk} and \cref{eq:double-star-after-applying-lemma} yields
\begin{align} \label{eq:Letak-Lk-2}
\norm*{   \mathcal L_\square^k g_\square -   \mathcal L^k g  }_2  & \leq
\sum_{m = 0}^{k-1} \binom{k}{m}   (k-m) \beta^{k-m-1}\norm{K_\square - K}_2 \norm{g}_2 + \norm*{ \left(g_\square - g\right)}_2 \nonumber \\
& \leq
 \beta^{k-1}\sum_{m = 0}^{k-1} \binom{k}{m}   (k-m)\norm{K_\square - K}_2 \norm{g}_2 + \norm*{ \left(g_\square - g\right)}_2. 
\end{align}
From~\cref{eq:u-w-1,eq:Letak-Lk-2} we obtain
\begin{align}\label{eq:u-w-2}
\norm{u(\cdot,t)-w(\cdot,t)}_2 & \leq  \norm{g_\square - g}_2 +\sum_{k = 1}^\infty  \frac{t^k}{k!} \norm*{ \left(g_\square - g\right)}_2 \nonumber \\ 
&   + \norm{K_\square - K}_2 \norm{g}_2 \sum_{k = 1}^\infty  \frac{t^k}{k!} \beta^{k-1}\sum_{m = 0}^{k-1} \binom{k}{m}   (k-m) \nonumber \\
& = \norm{g_\square - g}_2 e^{t} +\norm{K_\square - K}_2 \norm{g}_2  \sum_{k = 1}^\infty  \frac{t^k}{k!} \beta^{k-1}\sum_{m = 0}^{k-1} \binom{k}{m}   (k-m)\nonumber \\
\intertext{and with $\sum_{m = 0}^{k-1} \binom{k}{m}   (k-m) = k2^{k-1}$,}
& = \norm{g_\square - g}_2 e^{t} +\norm{K_\square - K}_2 \norm{g}_2 \sum_{k = 1}^\infty  \frac{t^k}{k!}k (2\beta)^{k-1} \nonumber \\
& \leq \norm{g_\square - g}_2 e^{t} +\norm{K_\square - K}_2 \norm{g}_2   t\sum_{k = 1}^\infty  \frac{(2\beta t)^{k-1}}{(k-1)!}   \nonumber \\
& \leq \norm{g_\square - g}_2 e^{t} +\underbrace{\norm{K_\square - K}_2}_{(***)} \norm{g}_2 te^{2\beta t}. 
\end{align}
By the Lebesgue differentiation theorem, $g_\square \rightarrow g $ pointwise for almost every $x\in [0,1]$ as  $n\rightarrow \infty$, so that 
\begin{equation}\label{eq:geta-tends-to-g}
\norm{g_\square -g}_2 \xrightarrow[n   \to 0]{} 0
\end{equation}
by dominated convergence~\cite{medvedev2014nonlinearDENSE}. Let us consider $(***)$ in~\cref{eq:u-w-2} : 
\begin{align}\label{eq:Keta-K}
\norm{K_\square - K}_2^2 &  =   \int_{[0,1]^2} \left(    \frac{\eta(W/\mathcal P)(x,y)}{k_\square(x,y)}   - \frac{W(x,y)}{k(y)} \right)^2  dx dy  \nonumber \\
& \leq \esssup\limits_{y \in [0,1]} \frac{1}{k_\square^2(y) k^2(y)} \int_{[0,1]^2} \big(        \eta(W/\mathcal P)(x,y) k(y) - W(x,y) k_\square(y)         \big)^2 dx dy \nonumber \\
& \leq \beta^2\int_{[0,1]^2} \big(        \eta(W/\mathcal P)(x,y) (k(y)-k_\square(y)) \big)^2 dx dy \nonumber \\
& + \beta^2 \int_{[0,1]^2}   \big( (W(x,y) - \eta(W/\mathcal P)(x,y)) k_\square(y)         \big)^2 dx dy \nonumber \\
\intertext{and because $\norm{\eta(W / \mathcal P)}_2 \leq 1$ and $\norm{k_\square}_2 \leq 1$, }
& \leq \beta^2 \left(       \norm{k-k_\square}_2^2     + \norm{W- \eta(W/\mathcal P)}_2^2           \right). 
\end{align}
By the Cauchy-Schwarz inequality, 
\begin{align*}
  \norm{k-k_\square}_2^2  
  & = \int_{0}^{1}    \left(          \int_{0}^{1}  \left(   W(y,z) -  \eta(W/\mathcal P)   (y,z)     \right)  dz  \right)^2  dy  \\
  & \leq  \int_{0}^{1}              \int_{0}^{1} \left(   W(y,z) -  \eta(W/\mathcal P) (y,z)    \right)^2  dz    dy \\
  & = \norm{W-\eta(W/\mathcal P)}_2^2, 
\end{align*}
which together with \cref{eq:Keta-K} yields 
\begin{equation} \label{eq:Keta-K-2}
\norm{K_\square - K}_2^2 \leq 2 \beta^2  \norm{W- \eta(W/\mathcal P)}_2^2. 
\end{equation}
By the same argument leading to \cref{eq:geta-tends-to-g}, we have $\norm{W- \eta(W/\mathcal P)}_2 \rightarrow 0$ as $n\rightarrow \infty$
which with~\cref{eq:Keta-K-2} implies
\begin{equation}\label{eq:Keta-K-3}
\norm{K_\square -K}_2 \xrightarrow[n   \to 0]{} 0. 
\end{equation}
Combining \cref{eq:u-w-2,eq:geta-tends-to-g,eq:Keta-K-3} allows to conclude. 
\end{proof}

\subsection{Convergence on the sampled graph $W_{[n]}$}
\label{sec:convergence-on-sampled-graph}
The case of the discrete problem on $W_{[n]}$ can be handled similarly as the discrete problem on $W / \mathcal P$, and the convergence theorem follows mainly from the observation in~\cref{sec:from-graphons-to-graphs-and-back} that $W_{[n]} \rightarrow W$ at every point of continuity of $W$. The necessary convergence in $L^2$ will follow from the supplemental assumption that the graphon is almost everywhere continuous.  The discrete problem (in its step function form) associated to \cref{eq:IVP} on the sampled graph $W_{[n]}$ reads
\begin{subequations}\label{eq:IVP-sampled-graph}
	\begin{align}
	\frac{\partial }{\partial t}u(x,t) &= \mathcal L_{[n]} ^{rw} u(x,t)\\
	u(x,0) &= g_\square(x)  
	\end{align}
\end{subequations}
where the random walk  Laplacian operator on $W_{[n]}$ satisfies
\begin{equation}
\mathcal L_{[n]} ^{rw} f(x)  = \int_0^1 \frac{\eta \left(   W_{[n]}   \right)(x,y)}{k_{[n]} (y)} f(y) dy - f(x), \quad \forall f \in L^2[0,1], 
\end{equation}
and the initial condition is again averaged on each cell of the partition as in~\cref{eq:initial-cond-on-quotient-graph}. One needs to assume sufficiently large $n$ to guarantee $k_{[n]}$ to be bounded away from~0 and so the Laplacian to be well-defined.
\begin{theorem} [Convergence with $W_{[n]}$] 
	\label{thm:sampled-graph}Let $W$ be a connected, almost everywhere continuous graphon satisfying ~\cref{hyp:bounded-away-from-0} and let $w(x,t)$ be the solution of  IVP~\cref{eq:IVP}. Further let $u(x,t)$ be the solution of the associated discrete problem~\cref{eq:IVP-sampled-graph}. Then for all $t \in \mathbb R^+$ it holds that 
	\begin{displaymath}
	\norm{u(\cdot,t) - w(\cdot,t)}_2 \rightarrow 0 \quad \mbox{ as } \quad n \rightarrow \infty. 
	\end{displaymath}
\end{theorem}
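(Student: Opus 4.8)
The plan is to mirror the proof of \Cref{thm:convergence-quotient-graph} almost verbatim, since the only structural difference between the two theorems is the way the approximating graph is built from $W$: here we use the sampled graph $W_{[n]}$ instead of the quotient graph $W/\mathcal P$. As in the quotient case, I would write the two solutions in exponential-series form using \Cref{rmk:solution-on-quotient-graph} (the analogous statement holds for $W_{[n]}$ once $n$ is large enough that $k_{[n]}$ is bounded away from zero, as noted just before the theorem), so that
\begin{displaymath}
\norm{u(\cdot,t)-w(\cdot,t)}_2 = \norm*{e^{t\mathcal L_{[n]}}g_\square - e^{t\mathcal L}g}_2,
\end{displaymath}
and then bound this via the triangle (Minkowski) inequality exactly as in \cref{eq:u-w-1}. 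The decomposition $\mathcal L^{rw}_{[n]} = \mathcal K_{[n]} - \mathcal I$, Newton's binomial expansion, and \Cref{lemma:step-kernel-on-step-graphon,lemma:difference-HS-operators} apply with $K_\square$ replaced by the sampled kernel $K_{[n]} = \eta(W_{[n]})/k_{[n]}$, yielding the same master estimate as \cref{eq:u-w-2}:
\begin{displaymath}
\norm{u(\cdot,t)-w(\cdot,t)}_2 \leq \norm{g_\square - g}_2\, e^{t} + \norm{K_{[n]} - K}_2\, \norm{g}_2\, t\, e^{2\beta t}.
\end{displaymath}

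The first term vanishes by the Lebesgue differentiation theorem and dominated convergence, exactly as in \cref{eq:geta-tends-to-g}; this part needs no change. The heart of the argument is therefore to show $\norm{K_{[n]} - K}_2 \to 0$. Following the chain of inequalities \cref{eq:Keta-K,eq:Keta-K-2}, I would bound
\begin{displaymath}
\norm{K_{[n]} - K}_2^2 \leq 2\beta^2\, \norm{W - \eta(W_{[n]})}_2^2,
\end{displaymath}
using \Cref{hyp:bounded-away-from-0}, the fact that $k_{[n]}$ is bounded below for large $n$, $0\leq W\leq 1$, $\norm{\eta(W_{[n]})}_2 \leq 1$, $\norm{k_{[n]}}_2 \leq 1$, and a Cauchy-Schwarz bound $\norm{k - k_{[n]}}_2 \leq \norm{W - \eta(W_{[n]})}_2$ identical to the one in the quotient proof.

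The one genuinely new ingredient — and the step I expect to be the main obstacle — is establishing $\norm{W - \eta(W_{[n]})}_2 \to 0$. In the quotient case this convergence came for free from the Lebesgue differentiation theorem applied to cell averages, but the sampled graphon $\eta(W_{[n]})$ is built from point evaluations $W(i/n, j/n)$ rather than averages, so that tool is unavailable. This is precisely why the hypothesis that $W$ is \emph{almost everywhere continuous} is added to the statement. From \cref{sec:from-graphons-to-graphs-and-back} we have $\eta(W_{[n]}) \to W$ pointwise at every point of continuity of $W$, hence $\mu$-almost everywhere by the a.e.-continuity assumption; since $0 \leq W \leq 1$ and $0 \leq \eta(W_{[n]}) \leq 1$ the integrand is uniformly bounded on the finite measure space $[0,1]^2$, so the bounded (dominated) convergence theorem upgrades this pointwise convergence to $L^2$ convergence, giving $\norm{W - \eta(W_{[n]})}_2 \to 0$. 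Combining this with the estimate on $\norm{K_{[n]} - K}_2$ and the vanishing of $\norm{g_\square - g}_2$ yields the claim; I would take care to note that all constants ($\beta$, the lower bound on $k_{[n]}$) are uniform in $n$ once $n$ is large enough, so the $e^{2\beta t}$ factor does not spoil the limit.
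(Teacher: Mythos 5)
Your proposal is correct and follows essentially the same route as the paper, which itself only remarks that the proof is ``similar'' to that of \cref{thm:convergence-quotient-graph} and, in the paragraph preceding the theorem, points to exactly the step you identify as the genuinely new one: upgrading the pointwise convergence $\eta(W_{[n]})\to W$ at continuity points to $\norm{W-\eta(W_{[n]})}_2\to 0$ via the almost-everywhere-continuity hypothesis and dominated convergence. Your handling of the uniform lower bound on $k_{[n]}$ for large $n$ matches the paper's own (stated, not proved) assumption, so nothing is missing relative to the paper's argument.
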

The proof is similar as for~\cref{thm:convergence-quotient-graph}. 
\begin{remark}
	The initial condition could have been sampled in a similar fashion as the graphon, to yield the step function
	$
	g_{[n]}  = \sum_{i = 1}^ng\left(\frac{ i}{n}\right) \chi_{P_i}. 
	$ 
	Almost everywhere continuity of $g$ would ensure that $\norm{g-g_{[n]}}_2 \rightarrow 0$ when $n \rightarrow \infty$, and would be part of the hypothesis of a convergence theorem. The proof of \cref{thm:sampled-graph} would only require minor changes, which are similar to those  discussed next in the new context of~\cref{sec:convergence-on-convergent-graph-sequences}. 
\end{remark}

\subsection{Convergence for  a sequence of discrete problems}
\label{sec:convergence-on-convergent-graph-sequences}
This time we consider a sequence of problems defined on graphs with increasing number of nodes. We assume the sequence of dense connected graphs, say $(G_n)$, converges to a limit graphon $W$ in the $L^2$ metric, in the sense that $\norm{\eta(G_n) - W}_2 \rightarrow 0$ as $n\rightarrow \infty$.  Let   $k_n$ denote the degree function of the empirical graphon $\eta(G_n)$. 
Consider the family of   discrete problems  under the mapping $\eta$
\begin{subequations}\label{eq:IVP-convergent-graph}
	\begin{align}
	\frac{\partial }{\partial t}u(x,t) &= \mathcal L_{n} ^{rw} u(x,t)\\
	u(x,0) &= g_n(x)  \in L^2[0,1],
	\end{align}
\end{subequations}
where the random walk  Laplacian operator $\mathcal L_n^{rw}$ satisfies
\begin{equation}
\mathcal L_n ^{rw} f(x)  = \int_0^1 \frac{\eta \left(  G_n \right)(x,y)}{k_{n} (y)} f(y) dy - f(x), \quad \forall f \in L^2[0,1]. 
\end{equation}
Similarly as before, we write $\mathcal L^{rw}_n  = \mathcal K_n - \mathcal I$. 
\begin{theorem} [Convergence with $(G_n)$] 
	\label{thm:convergence-Gn}Let $(G_n)$ be a sequence of connected graphs that converges to a connected graphon $W$ satisfying \cref{hyp:bounded-away-from-0}. Let   $w(x,t)$ be the solution of the  IVP~\cref{eq:IVP} associated to $W$ with initial condition $w(\cdot, 0) = g \in L^2[0,1]$. Further let $u(x,t)$ be the solution of the corresponding discrete problem~\cref{eq:IVP-convergent-graph}, and assume that $\norm{g_n-g}_2 \rightarrow 0$ as $n \rightarrow \infty$. Then for all $t \in \mathbb R^+$ it holds that 
	\begin{displaymath}
	\norm{u(\cdot,t) - w(\cdot,t)}_2 \rightarrow 0 \quad \mbox{ as } \quad n \rightarrow \infty. 
	\end{displaymath}
\end{theorem}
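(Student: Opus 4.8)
The plan is to follow the template of the proof of \cref{thm:convergence-quotient-graph}, replacing the quotient-graph objects $\mathcal L_\square^{rw}$, $\mathcal K_\square$, $k_\square$, $g_\square$ by their sequence analogues $\mathcal L_n^{rw}$, $\mathcal K_n$, $k_n$, $g_n$. Since each $G_n$ is a graph, its empirical graphon $\eta(G_n)$ is a.e.-constant on the cells $P_i\times P_j$, and so is the kernel $K_n = \eta(G_n)/k_n$; hence $\mathcal K_n$ is a Hilbert--Schmidt operator to which \cref{lemma:step-kernel-on-step-graphon} and \cref{lemma:difference-HS-operators} apply. I would begin, exactly as in \cref{eq:u-w-1}, by writing $\norm{u(\cdot,t)-w(\cdot,t)}_2 = \norm{e^{t\mathcal L_n}g_n - e^{t\mathcal L}g}_2$, expanding both semigroups as power series, and reducing the problem to estimating $\norm{\mathcal L_n^k g_n - \mathcal L^k g}_2$ for each $k\ge 1$.

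The one structural departure from \cref{thm:convergence-quotient-graph} is that $g_n$ is now an arbitrary approximation of $g$ rather than its cell-average, so \cref{lemma:step-kernel-on-step-graphon} can no longer be used to replace $g_n$ by $g$ inside $\mathcal K_n^{k-m}$. Instead, after the binomial expansion $\mathcal L_n = \mathcal K_n - \mathcal I$, I would split each summand as
\begin{equation*}
\mathcal K_n^{k-m} g_n - \mathcal K^{k-m} g = \mathcal K_n^{k-m}(g_n-g) + \bigl(\mathcal K_n^{k-m} - \mathcal K^{k-m}\bigr)g,
\end{equation*}
bounding the first piece by $\beta^{k-m}\norm{g_n-g}_2$ via the operator-norm estimate $\norm{\mathcal K_n}\le\beta$, and the second by iterating \cref{lemma:difference-HS-operators} exactly as in \cref{eq:double-star-after-applying-lemma} to get $(k-m)\beta^{k-m-1}\norm{K_n-K}_2\norm{g}_2$. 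Summing against the binomial weights and over $k$ then yields an estimate of the shape
\begin{equation*}
\norm{u(\cdot,t)-w(\cdot,t)}_2 \le \norm{g_n-g}_2\,e^{(1+\beta)t} + \norm{K_n-K}_2\,\norm{g}_2\, t\,e^{2\beta t},
\end{equation*}
reducing the theorem to showing that both factors vanish. The first is the hypothesis; for the second I would repeat the computation \cref{eq:Keta-K}--\cref{eq:Keta-K-2} to get $\norm{K_n-K}_2^2 \le 2\beta^2(\norm{k-k_n}_2^2 + \norm{W-\eta(G_n)}_2^2)$, and note that $\norm{k-k_n}_2\le\norm{W-\eta(G_n)}_2$ by Cauchy--Schwarz while $\norm{W-\eta(G_n)}_2\to 0$ by hypothesis.

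The main obstacle is producing the uniform constant $\beta$: the whole argument hinges on a single bound $\norm{\mathcal K_n}\le\beta$ valid for all large $n$, and since $K_n(x,y)=\eta(G_n)(x,y)/k_n(y)\le 1/k_n(y)$ this amounts to a uniform lower bound $k_n\ge c'>0$ for all large $n$. This does \emph{not} follow from $L^2$ convergence alone: although $\norm{k_n-k}_2\to 0$ and $k\ge c$, the degree functions $k_n$ could still dip arbitrarily close to zero on vanishingly small sets, on which $1/k_n$ -- and hence $\norm{K_n-K}_2$ -- would be uncontrolled. The clean remedy is to strengthen the mode of convergence just enough to control the degree functions uniformly: if $\norm{k_n-k}_\infty\to 0$ (in particular if $\eta(G_n)\to W$ uniformly), then $k_n\ge c-\varepsilon\ge c/2$ for $n$ large, giving $\beta=\max\{\norm{K}_2,\,2/c\}$. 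I would therefore add the hypothesis that $k_n$ is bounded away from $0$ for large $n$, consistently with the remark preceding \cref{thm:sampled-graph}, after which the estimates above close the argument.
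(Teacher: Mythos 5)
Your proposal is correct and follows essentially the same route as the paper's own proof: the same semigroup power-series expansion, the same add-and-subtract splitting $\mathcal K_n^{k-m}g_n-\mathcal K^{k-m}g=\mathcal K_n^{k-m}(g_n-g)+(\mathcal K_n^{k-m}-\mathcal K^{k-m})g$ to compensate for the loss of \cref{lemma:step-kernel-on-step-graphon}, and the same final two-term bound in $\norm{g_n-g}_2$ and $\norm{K_n-K}_2$. Your closing concern about a uniform lower bound on $k_n$ is exactly the point the paper disposes of with the phrase ``consider a sufficiently large $n$ such that $k_n\geq c$'' (and by letting $\beta$ depend on $\norm{1/k_n}_\infty$), so promoting it to an explicit hypothesis is a legitimate tightening rather than a different argument.
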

\begin{proof} 
The proof follows the same steps as for \cref{thm:convergence-quotient-graph}. However, using \cref{lemma:step-kernel-on-step-graphon} to obtain~\cref{eq:Letak-Lk} is now prohibited due to the initial condition of a discrete problem no longer resulting from an  averaging of the continuous IVP. Consider a sufficiently large $n$ such that the degree function of the empirical graphon satisfies $k_n\geq c$ for some constant $c>0$. Not relying this time on~\cref{lemma:step-kernel-on-step-graphon}, we write
\begin{align*}
\norm{\mathcal K_n^{\ell - m} g_n - \mathcal K^{k-m} g}_2  & =  \norm{\mathcal K_n^{\ell - m} g_n - \mathcal K_n^{\ell - m} g + \mathcal K_n^{\ell - m} g - \mathcal K^{k-m} g}_2 \\
& \leq \norm{\mathcal K_n^{\ell - m} (g_n - g)}_2 + \norm{( \mathcal K_n^{\ell - m}- \mathcal K^{\ell - m} )g }_2, 
\end{align*}
with the first term in the right-hand side newly present. Following the same steps leading to~\cref{eq:Letak-Lk}, we obtain 
\begin{multline*}
\norm*{   \mathcal L_n^k g_n -   \mathcal L^k g  }_2  
\leq    \sum_{m = 0}^{k-1} \binom{k}{m} \norm{\mathcal K_n^{\ell - m} (g_n - g)}_2  \\
 + \sum_{m = 0}^{k-1} \binom{k}{m}\norm*{ \left(\mathcal K_n^{k-m}   - \mathcal K^{k-m} \right) g  }_2  + \norm*{ \left(g_n - g\right)}_2, 
\end{multline*}
where again the first term right of the inequality is new. In fashion similar  to the proof of \cref{thm:convergence-quotient-graph}, with $\beta := \max \left\{   \norm{K}_2 , \norm*{\frac{1}{k_{n}}}_\infty     \right\}$ we have
\begin{multline*} 
\norm*{  u(\cdot,t) - w(\cdot,t)  }_2   \leq     \sum_{k = 1}^\infty  \frac{t^k}{k!} \sum_{m = 0}^{k-1} \binom{k}{m}   \norm{\mathcal K_n^{\ell - m} (g_n - g)}_2 \\
+ \norm{g_n - g}_2 e^{t} +\norm{K_n - K}_2  \norm{g}_2 te^{2\beta t}. 
\end{multline*}
Using $\norm{\mathcal K^{k-m}} \leq \beta^{k-m} \leq \beta^k$ and $\sum_{m = 0}^{k-1} \binom{k}{m}  \leq 2^k$, we have
\begin{equation*}
\sum_{k = 1}^\infty  \frac{t^k}{k!} \sum_{m = 0}^{k-1} \binom{k}{m}   \norm{\mathcal K_n^{\ell - m} (g_n - g)}_2
\leq \sum_{k = 1}^\infty  \frac{t^k}{k!} 2^k  \beta^k  \norm{(g_n - g)}_2
= \norm{  (g_n - g)}_2e^{2 \beta t}, 
\end{equation*}
leading to 
\begin{displaymath}
\norm*{  u(\cdot,t) - w(\cdot,t)  }_2   \leq  \norm{g_n - g}_2 \left(e^{t} + e^{2 \beta t}\right) +\norm{K_n - K}_2  \norm{g}_2 te^{2\beta t}. 
\end{displaymath}
\end{proof}

\section{Relaxation}
\label{sec:relaxation}
The evolution of a system towards its asymptotic state $w_\infty$ starting from any initial condition is know as relaxation. The so-called relaxation time characterizes the rate of this evolution.  In the continuum limit of the node-centric walk, it is determined by the spectral properties of $\mathcal{K}$, in a way reminiscent of random walks on  finite graphs. For the node-centric continuous-time walk, we will show now that this rate can be exponential.  Let us define a normalized adjacency operator, which is then used in the definition of a normalized Laplacian. 
\begin{definition}[Normalized adjacency operator] Under \cref{hyp:bounded-away-from-0}, let the normalized adjacency operator be the 
	integral operator $\mathcal A^{norm} : L^2[0,1] \rightarrow L^2[0,1]$ defined by $\mathcal A^{norm} = \mathcal M_{1/\sqrt k} \, \mathcal K \mathcal M_{\sqrt{k}}$. 
\end{definition} 
Observe that under \cref{hyp:bounded-away-from-0} the kernel $ W(x,y) / \sqrt{k(x)k(y)} $ 
of $\mathcal{A}^{norm}$ is square-integrable and symmetric. Hence   $\mathcal A^{norm}$ is a compact, self-adjoint Hilbert-Schmidt integral operator and the Hilbert-Schmidt theorem 
applies. 
Therefore, there exists an orthonormal basis of eigenfunctions $\left\{\phi_m \right\}$ with associated eigenvalues $\theta_m$, so that operator $\mathcal A^{norm}$ has the canonical form 
\begin{equation} \label{eq:S-canonical}
\mathcal A^{norm} = \sum_{m=1}^\infty \theta_m \left(\phi_m, \cdot\right) \phi_m. 
\end{equation}
The operator $\mathcal{L}^{norm} : = \mathcal A^{norm}  - \mathcal I$ is the associated normalized (or sometimes also called symmetric) Laplacian. 
Note that for $\ell \in \mathbb N$,   $\left(\mathcal A ^{norm}\right)^\ell$ has eigenfunctions $\phi_m$ and eigenvalues $\theta_m^\ell$, and that $\left(  \mathcal L^{rw}   \right)^\ell =  \mathcal M_{\sqrt{k}}   \left( \mathcal L ^{norm} \right)^\ell \mathcal  M_{1/\sqrt k} $. Combined with~\cref{eq:S-canonical} this yields the singular value decomposition
\begin{align} \label{eq:spectral-form-semigroup}
	e^{\mathcal L^{rw} t} 
	& =\mathcal M_{\sqrt{k}} \left(   \sum_{\ell = 0}^\infty  \frac{t^\ell}{\ell!}            \left(      \sum_{m=1}^\infty \theta_m \left( \phi_m,\cdot  \right) \phi_m - \mathcal I   \right)^\ell          \right)   \mathcal M_{1/\sqrt k} \nonumber \\
	&=\mathcal M_{\sqrt{k}} \left(   \sum_{\ell = 0}^\infty  \frac{t^\ell}{\ell!}            \left(      \sum_{m=1}^\infty (\theta_m-1) \left( \phi_m,\cdot  \right) \phi_m  \right)^\ell          \right)   \mathcal M_{1/\sqrt k} \nonumber \\
	&=\mathcal M_{\sqrt{k}} \left(    \sum_{m=1}^\infty  \sum_{\ell = 0}^\infty  \frac{t^\ell}{\ell!}    \lambda_m ^\ell         \left(     \phi_m,\cdot  \right) \phi_m          \right)   \mathcal M_{1/\sqrt k} \nonumber \\
	&=   \sum_{m=1}^\infty e^{\lambda_m  t}         \left(      \frac{\phi_m}{\sqrt{k}},\cdot  \right) \sqrt{k}\phi_m    
\end{align}
with  $\lambda_m = \theta_m - 1$ the eigenvalues of $\mathcal L^{norm}$. By letting $\psi_m  = \frac{\phi_m}{\sqrt{k}}$ and $\zeta_m  = \sqrt{k}\phi_m  $, the solution of IVP~\cref{eq:IVP} reads
\begin{equation}\label{eq:IVP-solution-spectral-form}
w(x,t) =  \sum_{m=1}^\infty e^{\lambda_m  t}         \left(   \psi_m     , g   \right) \zeta_m(x). 
\end{equation}
The following proposition allows for a characterization of  the rate of the relaxation towards $w_\infty$. 
\begin{proposition}
	\label{prop:eigenvalues}
	Let $W$ be a graphon satisfying  \cref{hyp:bounded-away-from-0}, then the eigenvalues $\lambda_m$ of $\mathcal L^{norm} $ are non-positive reals, and the largest eigenvalue is zero. If moreover $W$ is connected, then the eigenvalue zero has multiplicity one. 
\end{proposition}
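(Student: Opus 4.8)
The plan is to reduce everything to the spectral data of the self-adjoint operator $\mathcal{A}^{norm}$ and to use the relation $\lambda_m = \theta_m - 1$. Since the kernel $W(x,y)/\sqrt{k(x)k(y)}$ of $\mathcal{A}^{norm}$ is real, symmetric, and square-integrable under \cref{hyp:bounded-away-from-0}, the operator is self-adjoint, so all $\theta_m$ are real and hence so are the $\lambda_m$. It therefore suffices to prove three things about $\mathcal{A}^{norm}$: that $\theta_m \le 1$ for every $m$, that $\theta = 1$ is attained, and that under connectedness the eigenspace for $\theta = 1$ is one-dimensional. The first gives $\lambda_m \le 0$, the second gives that the largest eigenvalue of $\mathcal{L}^{norm}$ is $0$, and the third gives the multiplicity statement.

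For the upper bound I would compute the Dirichlet-type quadratic form of $\mathcal{I} - \mathcal{A}^{norm}$. Using $\int_0^1 W(x,y)/k(x)\,dy = 1$ to rewrite $\|f\|_2^2$ and symmetrizing in $x,y$ via the symmetry of $W$, a direct calculation gives
\[
\langle (\mathcal{I} - \mathcal{A}^{norm}) f, f\rangle = \frac12 \int_0^1\!\!\int_0^1 W(x,y)\left( \frac{f(x)}{\sqrt{k(x)}} - \frac{f(y)}{\sqrt{k(y)}}\right)^2 dx\, dy \ge 0,
\]
since $W \ge 0$. Hence $\langle \mathcal{A}^{norm} f, f\rangle \le \|f\|_2^2$ for all $f$, and the variational (Rayleigh) characterization of the top eigenvalue of a compact self-adjoint operator yields $\theta_m \le 1$, i.e. $\lambda_m \le 0$, for every $m$. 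To see that the bound is sharp I would exhibit an explicit eigenfunction: because $0 < c \le k \le 1$, the function $\sqrt{k}$ lies in $L^2[0,1]$, and a one-line computation using $\int_0^1 W(x,y)\,dy = k(x)$ shows $\mathcal{A}^{norm}\sqrt{k} = \sqrt{k}$ (equivalently $\mathcal{K}k = k$, consistent with the steady state $w_\infty \propto k$ of \cref{rmk:steady-state}). Thus $\theta = 1$ is an eigenvalue and the largest eigenvalue of $\mathcal{L}^{norm}$ is $0$.

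The main work, and the step I expect to be the real obstacle, is the multiplicity claim under connectedness. If $\lambda = 0$, i.e. $\mathcal{A}^{norm} f = f$, then the quadratic form above vanishes, forcing $W(x,y)\,(g(x) - g(y))^2 = 0$ for almost every $(x,y)$, where $g := f/\sqrt{k}$; equivalently $g(x) = g(y)$ for a.e. pair with $W(x,y) > 0$. I would then argue that $g$ must be constant a.e.: if it were not, there would be a level $c$ with $S := \{x : g(x) < c\}$ of measure $\mu(S) \in (0,1)$, and connectedness of $W$ would give $\int_{S \times ([0,1]\setminus S)} W(x,y)\,dx\,dy > 0$; but on $S \times ([0,1]\setminus S)$ one has $(g(x)-g(y))^2 > 0$, contradicting the a.e. vanishing of $W(x,y)\,(g(x)-g(y))^2$. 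Hence $g$ is constant and $f \propto \sqrt{k}$, so the zero eigenspace is one-dimensional.

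The delicate points in that last paragraph are precisely where care is needed: choosing the threshold $c$ so that both $S$ and its complement have positive measure (which is possible exactly because $g$ is non-constant, by looking at the distribution function $t \mapsto \mu\{g < t\}$), and handling the almost-everywhere qualifiers when passing from the pointwise condition $g(x)=g(y)$ on $\{W>0\}$ to a strict positivity of the integral over $S \times ([0,1]\setminus S)$. Everything else is routine given the self-adjoint, compact, Hilbert-Schmidt structure already established for $\mathcal{A}^{norm}$.
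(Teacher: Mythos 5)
Your proof is correct, and its skeleton coincides with the paper's: real eigenvalues from self-adjointness of $\mathcal A^{norm}$, the Dirichlet-form identity
$\langle(\mathcal I-\mathcal A^{norm})f,f\rangle=\tfrac12\iint W(x,y)\bigl(f(x)/\sqrt{k(x)}-f(y)/\sqrt{k(y)}\bigr)^2\,dx\,dy\ge 0$
(the paper writes the same identity for $\langle\mathcal L^{norm}f,f\rangle$ with the opposite sign), the explicit eigenfunction $\sqrt{k}$ for $\theta=1$, and connectedness to pin down the kernel. The one place you genuinely diverge is the final step showing that $g=f/\sqrt{k}$ must be constant when the form vanishes. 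The paper argues by contradiction for a two-valued piecewise constant $g$ on a partition $\{S,S^c\}$ and then asserts that the general case follows ``by density,'' a step it does not actually justify and which is not obviously sound, since the implication being proved is not manifestly stable under $L^2$ limits. You instead take an arbitrary non-constant $g$, use the distribution function $t\mapsto\mu\{g<t\}$ to produce a sublevel set $S=\{g<c\}$ with $\mu(S)\in(0,1)$, observe that $(g(x)-g(y))^2>0$ pointwise on $S\times S^c$, and conclude $W=0$ a.e.\ on $S\times S^c$, contradicting connectedness directly. This level-set argument is fully rigorous, handles all measurable $g$ at once, and closes the gap left by the paper's density appeal; it costs nothing in length. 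The only hypotheses you should state explicitly are the ones you already flag: $c$ exists because $g$ is not a.e.\ constant, and the a.e.\ vanishing of $W(x,y)(g(x)-g(y))^2$ combined with strict positivity of the second factor on $S\times S^c$ forces $\iint_{S\times S^c}W=0$.
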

\begin{proof}
	That the eigenvalues are reals results from  $\mathcal L^{norm} $ being  a self-adjoint operator on $L^2[0,1]$. Let $\lambda$ be an eigenvalue of $\mathcal L^{norm}$ associated to $\phi$. Then $\lambda$ is given by the Rayleigh quotient
	\begin{equation} \label{eq:Rayleigh-quotient}
	\lambda
	= \frac{\left(      \lambda \phi, \phi\right)}{ \left(\phi, \phi \right)  } 
	= \frac{\left(      \mathcal L^{norm}  \phi, \phi\right)}{ \left(\phi, \phi \right)  }. 
	\end{equation}
	Consider the numerator in the right-hand side  of \cref{eq:Rayleigh-quotient}. For all $f \in L^2[0,1]$ we can write
	\begin{align}
		\left(    \mathcal L^{norm}  f, f\right) 
		&  = \int_0^1 \int_0^1 \frac{W(x,y)}{\sqrt{k(x)} \sqrt{k(y)}} f(x)f(y) dxdy - \int_0^1 f^2(x) dx\nonumber \\
		& = \frac{1}{2} \Bigg(
		2 \int_0^1 \int_0^1  \frac{\sqrt{W(x,y)}}{\sqrt{k(x)} }   \frac{\sqrt{W(x,y)}}{\sqrt{k(y)} }    f(x)f(y) dxdy \nonumber \\
		& \hphantom{=} - \int_0^1 \int_0^1 \frac{W(x,y)}{k(x)}  f^2(x)   dx dy
		- \int_0^1 \int_0^1 \frac{W(x,y)}{k(y)}  f^2(y)   dx dy \Bigg) \nonumber \\
		& = -\frac{1}{2} \int_0^1 \int_0^1 \left(            \frac{\sqrt{W(x,y)}}{\sqrt{k(x)} } f(x) -  \frac{\sqrt{W(x,y)}}{\sqrt{k(y)} } f(y)             \right)^2dxdy,  \label{eq:rayleigh-num-1}
	\end{align}
	which is non-positive. The   claim that zero is an eigenvalue follows from the fact  that $\mathcal L^{norm}   \sqrt{k(x)} = 0$  on $[0,1]$. Finally, let us show that the nullspace of $\mathcal L^{norm} $ has dimension one if $W$ is connected. By defining $g = \mathcal M_{1/\sqrt{k}  }f$ on $[0,1]$, we have to show that if the right-hand side of  \cref{eq:rayleigh-num-1} is zero, namely
	\begin{equation}
	\label{eq:RHS}
	-\frac{1}{2} \iint_{[0,1]^2}W(x,y)  \left(         g(x) - g(y) \right)^2dxdy = 0, 
	\end{equation}
	then $g$ has to be a constant function on $[0,1]$. By contradiction, assume that there exists some non-constant function $g$ that verifies \cref{eq:RHS}. For simplicity, consider the case that $g = c_1$ on some $S \in \mathfrak M [0,1]$ with $\mu (S) \in (0,1)$, and $g = c_2 $ on $ S^c : = [0,1] \setminus S$, with $c_1, c_2 \in \mathbb{R}$, $c_1 \neq c_2$. The reasoning would be similar if $g$ is a piecewise constant function on any other partition of $[0,1]$, and can be extended by density to any  not piecewise constant $g$.  Based on \cref{eq:RHS}, we can write 
	\begin{equation}
	0  =  \iint_{[0,1]^2}W(x,y)  \left(         g(x) - g(y) \right)^2dxdy \geq  \iint_{S \times S^c}W(x,y)  \left(         g(x) - g(y) \right)^2dxdy, 
	\end{equation}
	and the integral in the right-hand side is zero. Since  $W$ is connected,  we have
	$
	\iint_{S \times S^c}W(x,y) dxdy >0
	$, 
	and hence there exists a positive-measured subset $E \times F $ of $S \times S^c$ such that $W >0$ on $E \times F$. Therefore, $  g(x) - g(y)  = 0$ for almost every $(x,y) \in E \times F$. But then, since $g = c_1$ on $E \subset S$, $g = c_1$ on $F \subset S^c$, a contradiction. 
\end{proof}
\begin{remark}[Spectral gap]\label{rmk:spec-gap-Lnorm}
	The last claim of~\cref{prop:eigenvalues} means that  the spectral gap of $\mathcal L^{norm}$, namely the positive difference between the largest  and the second largest eigenvalue, is nonzero when the graphon is connected with degree function bounded away from zero. Observe that if  $k$ is not bounded away from zero, 
	we may no longer write $\mathcal A^{norm} = \mathcal M_{1/\sqrt k} \, \mathcal K \mathcal M_{\sqrt{k}}$ because $1/ \sqrt k$ is not bounded. This implies that the spectrum of $\mathcal L^{rw}$ can no longer be deduced directly from the spectrum of the compact self-adjoint operator $\mathcal L^{norm}$. However, the eigenvalues of $\mathcal L^{rw}$ may in some cases be computed directly, see \cref{ex:separable-graphon} hereafter. 
	If the graphon is not connected, one can analyze the dynamics on each connected component independently, as follows from the decomposition introduced in~\cite{janson2008connectedness}. Therefore, it only remains to consider relaxation in the case of a connected graphon where \cref{hyp:bounded-away-from-0} is not satisfied, meaning $k(x)$ becomes arbitrarily small on positive measured subsets of $[0,1]$, and such that $\mathcal K$ is still well-defined. In this situation the eigenvalues of  the generally non-self-adjoint operator $\mathcal L^{rw}$ may be embedded in the continuous spectrum whilst in the discrete or discretized version, the spectrum is composed only of eigenvalues and there can be a positive spectral gap as a result of connectedness in a finite graph. 
\end{remark}

\begin{example}[Eigenvalues of $\mathcal L^{rw}$ on a separable graphon] \label{ex:separable-graphon}
	Consider the separable\footnote{The   graphon $W(x,y)$ is separable if it can be written as   $W(x,y) = \zeta(x) \zeta(y)$ for some function~$\zeta$. } graphon $W(x,y) = xy$. The degree function is $k(x) = x/2$, in which case  $\mathcal L^{rw} = \mathcal K - \mathcal I $ with $\mathcal Kf(x) = 2x \int_0^1 f(y)dy$.  Any eigenvalue $\lambda_{\mathcal K}$ of $\mathcal K$ satisfies 
	\begin{equation}\label{eq:eigenpairs-K}
	2x \int_0^1 \phi_{\mathcal K}(y) dy = \lambda_{\mathcal K}\phi_{\mathcal K} (x), \quad x \in [0,1], 
	\end{equation}
	where $\phi_{\mathcal K}$ is an eigenfunction. It suffices to subtract one to the eigenvalues of $\mathcal K$ and to hold the same eigenfunctions to obtain the eigenpairs of $\mathcal L^{rw}$. From~\eqref{eq:eigenpairs-K}, one finds  $\lambda_{\mathcal K} = 1$ with the one-dimensional eigenspace $\spanset \lbrace x \rbrace$, or $\lambda_{\mathcal K} = 0$ with the infinite-dimensional eigenspace $\lbrace1\rbrace^\perp$. Observe these spaces are not orthogonal, but their sum is $L^2[0,1]$. 
\end{example}

\section{Extension to the discrete-time walk}
\label{sec:discrete-time}
The analysis of the node-centric continuous-time walk carries over to the discrete-time version~\cref{eq:DTRW}. 
The corresponding  IVP on the continuum reads
\begin{subequations}\label{eq:DTRW-IVP-discrete}
	\begin{align}
	w(\cdot, \ell+1) & = \mathcal K w(\cdot,\ell),  \quad  \ell  \in \mathbb N_0 \\
	w(\cdot,0) & = w_0 \in L^2[0,1], 
	\end{align}
\end{subequations}
with solution given by
$
w(\cdot,\ell ) = \mathcal K^\ell w_0
$
for every $\ell \in \mathbb{N}$. 

Following the same steps as in~\cref{sec:continuum-limit-node-centric,sec:well-posedness,sec:Convergence-on-dense-weighted-graphs}, we obtain a similar convergence result  on the quotient graph $W/\mathcal P$. Indeed, the proof  in continuous time already contains the necessary bound on the operator norm of the difference  $\mathcal K^\ell\mathcal -  \mathcal K_\square^\ell$, see \cref{eq:double-star-after-applying-lemma}. The same holds true on the sampled graph $W_{[n]}$ and for a sequence of discrete problems. 
Analogously as for~\cref{eq:IVP-solution-spectral-form},  the spectral expansion of the  solution of the discrete-time IVP~\cref{eq:DTRW-IVP-discrete} reads
\begin{equation} \label{eq:relax-time-disc}
w(\cdot,\ell ) = \sum_{m=1}^\infty   \lambda_m^\ell  \left(   \psi_m, w_0 \right) \zeta_m , \quad \ell \geq 0. 
\end{equation}

\section{Conclusion}
\label{sec:conclusions}

There are two main arguments motivating this work. On the one hand, random walks and Laplacians play a central role in the study of graphs, and a better understanding of their behavior on graphons has a clear mathematical interest, with theoretical and algorithmic objectives. On the other hand, even though large networks become ever  more common in numerous fields of research, a rigorous study of the continuum limit of the different types of random walks on graphs was still lacking. 

This paper is intended as a first step towards a systematic study of classes  of random walks on discrete domains, relying on the adequate framework provided by graph-limit theory. We have first shown that the continuum-limit of the discrete heat equation~\cite{medvedev2014nonlinearDENSE} could be interpreted as the limit of a rescaled edge-centric continuous-time Poisson random walk. We have then studied the continuum limit of the remaining two fundamental classes of random walks on graphs, which complement the discrete heat equation : the discrete-time walk, and its continuous-time generalization. A final part of the document was devoted to spectral aspects of the introduced random walk Laplacian operator, thereby allowing for a characterization of the relaxation time of the process. 

The world of random walks is a very broad one, and   this initial work   calls for extensions. A  promising research direction would consist in generalizing the  semigroup approach developed here, or the one  in~\cite{medvedev2014nonlinearDENSE,medvedev2014small,medvedev2018kuramoto},  to the diverse classes of random walk processes omitted here, for instance  walks on temporal or directed networks. 
A second line of research could focus on the case of sparse graphs, and the way they affect the approximation procedure we have applied.  Sparsity is indeed known to be the norm rather than the exception in real-life networks.  Such extension was  already provided for the graph-limit version of the heat equation, using $L^p$ graphons~\cite{Borgs2014Lptheory,medvedev2014small,kaliuzhnyi2017semilinear}. Another possible venue of investigation could follow from recent work on sparse exchangeable graphs generated via graphon processes or graphexes~\cite{borgs2011limitsrandomsequences,  borgs2016sparse,caron2017sparse}. 

\bibliographystyle{siamplain}
\bibliography{RWgraphons_refs}
\end{document}